\newcommand{\gn}[1]{\langle {#1}\rangle}
\newcommand{\Qd}{/\!\!/}
\newtheorem{thm}{Theorem}[section]
\newtheorem{cor}{Corollary}[thm]
\newtheorem{lem}[thm]{Lemma}
\newtheorem{prop}[thm]{Proposition}
\newtheorem{prob}[thm]{Problem}
\theoremstyle{definition}
\newtheorem*{remark}{Remark}
\newtheorem*{example}{Example}
\title{Association schemes all of whose symmetric fusion schemes are integral}
\author{Mitsugu Hirasaka}
\address{Department of Mathematics, College of Sciences, Pusan National University, 2, Busandaehak-ro 63beon-gil, Geumjung-gu, Busan 46241, Republic of Korea}
\email{hirasaka@pusan.ac.kr}
\author{Kijung Kim}
\address{Department of Mathematics, College of Sciences, Pusan National University, 2, Busandaehak-ro 63beon-gil, Geumjung-gu, Busan 46241, Republic of Korea}
\email{knukkj@pusan.ac.kr}
\author{Semin Oh}
\address{Department of Mathematics, College of Sciences, Pusan National University, 2, Busandaehak-ro 63beon-gil, Geumjung-gu, Busan 46241, Republic of Korea}
\email{semin@pusan.ac.kr}
\begin{document}

\maketitle

  \begin{abstract}
    In this paper we aim to characterize association schemes
    all of whose symmetric fusion schemes have only integral eigenvalues,
    and classify those obtained from a regular action of a finite group
    by taking its orbitals.
  \end{abstract}


\section{Introduction}
In the history of algebraic combinatorics it has been one of the important topics
to consider eigenvalues of the adjacency matrix of a graph.
In \cite{bi} and \cite{bcn} many criterions and conjectures on such problems are suggested and the eigenvalues of well-known distance-regular graphs are explicitly found.
Together with web catalogue \cite{kissme} this gives many association schemes with integral first eigenmatrices (see \cite{bi} and \cite{bcn} for its definition).
As mentioned in \cite[Ex.~2.1]{bi} a transitive permutation group $H$ on a finite set $X$ induces an association scheme $(X,\mathcal{R}_H)$ where $\mathcal{R}_H$ is the set of orbitals of $H$.
If $G$ is a permutation group of $X$ containing $H$, then each element in $\mathcal{R}_G$ is a union of elements of $\mathcal{R}_H$, and the first eigenmatrix of $(X,\mathcal{R}_G)$ is influenced by that of $(X,\mathcal{R}_H)$.
In general, the fusion (and fission) of association relations gives rise to new association schemes from a given association scheme.
In this paper we focus on association schemes whose adjacency matrices have only integral eigenvalues.

The authors of \cite{ss} introduced fusion association schemes and presented some diagrams to enumerate all association schemes of given small orders according to the partial order defined by fusing.
As shown in the enumeration, the association scheme induced by the icosahedron can be obtained as a fusion of the alternating group $A_4$ of degree $4$ where we identify a finite group $G$ with the association scheme obtained from a regular action of $G$ on itself, but not all eigenvalues of the icosahedron are integral.
On the other hand, every symmetric fusion association scheme of any other non-abelian groups of order 12 has the integral first eigenmatrix.

For more general cases, we introduce the following definition:
an association scheme is said to be \textit{desirable} if the first eigenmatrix of each of its symmetric fusion schemes is integral, otherwise it is said to be \textit{undesirable}.
Our main problem is the following:
\begin{prob}\label{prob:11}
  Characterize desirable association schemes.
\end{prob}


For the remainder of this article we shall write schemes instead of association schemes for short.
It is obvious that every fusion scheme of a desirable scheme is desirable.
Moreover given a desirable scheme $(X,S)$, the subscheme induced by a closed subset and the quotient modulo a closed subset are desirable (see Lemma~\ref{lem:5}), while the direct product (or the other scheme products) of two desirable schemes are not necessarily desirable.
The following are examples of desirable or undesirable association schemes:
\begin{example}\label{ex:11}
  \begin{enumerate}
  \item The scheme of a cyclic group $G$ of order $m$ is undesirable if $m\notin \{1,2,3,4,6\}$ by Corollary~\ref{cor:8}; 
  \item Every symmetric scheme with non-integral first eigenmatrix is undesirable since for any scheme it is also one of its fusion schemes;
  \item Every symmetric scheme with integral first eigenmatrix is desirable by \cite[Lemma~1~(2)]{b};
  \item Every association scheme of rank $2$ is symmetric and integral. This implies that every non-symmetric scheme of rank $3$ is desirable.
  \end{enumerate}
\end{example}
A group $G$ is said to be \textit{desirable} if it induces a desirable scheme by its regular action, otherwise it is said to be \textit{undesirable}.
Then the former example may lead readers to confront the following problem:
\begin{prob} \label{prob:2}
  Find all \textit{desirable} finite groups.
\end{prob}

\begin{remark}
  For a finite group $G$, there is a one-to-one correspondence between the set of fusions of the association scheme induced by $G$ by its regular action and the set of Schur rings over $G$ (see \cite{MP} for the definition of Schur ring).
  Thus Problem~\ref{prob:2} can be stated in terms of Schur rings.
\end{remark}

\begin{remark}
  In connection with Problem~\ref{prob:2} we mention that Bridge and Mena \cite{bm} give a criterion on Cayley graphs over abelian groups with integral eigenvalues which is obtained from a group action.
\end{remark}

By Corollary~\ref{cor:8}, if a finite group $G$ is desirable, then $|G|=2^a3^b$ for some nonnegative integers $a,b$ and the order of each element of $G$ belongs to the set $\{1,2,3,4,6\}$.
But, the converse does not hold because of $A_4$.
In \cite{ado} all \textit{Cayley integral groups} $G$ were classified; the defining property of such a group is that the eigenvalues of any undirected Cayley graph over $G$ are integral. Let $C_n$, $S_n$ and $Q_8$ denote the cyclic group of order $n$, the symmetric group of degree $n$ and the quaternion group, respectively.
It is remarkable that any Cayley integral group is desirable, but the converse does not necessarily hold because $S_3\times C_2$ is desirable but not Cayley integral.
On the other hand, this is a unique desirable group but not Cayley integral, that is exactly the statement of our main result:
\begin{thm}\label{thm:11}
  Every desirable group is isomorphic to one of the following:
  \begin{enumerate}
  \item an abelian group the exponent of which divides $4$ or $6$;
  \item $Q_8\times C_2^m$ for some nonnegative integer $m$;
  \item $S_3$;
  \item $S_3\times C_2$;
  \item $C_3\rtimes C_4=\gn{x,y\mid x^3=y^4=1, y^{-1}xy=x^{-1}}$.
  \end{enumerate}
\end{thm}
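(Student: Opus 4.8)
The plan is to combine the necessary condition of Corollary~\ref{cor:8} with the observation that being desirable is inherited by subgroups and quotients of the group (immediate from Lemma~\ref{lem:5} together with the bijection between closed subsets of the scheme of $G$ and subgroups of $G$), so that it suffices to pin down a small family of minimal \emph{obstructions}---undesirable groups all of whose proper sections are desirable---and then describe the groups avoiding all of them. The ``if'' direction is then short: the groups in (i)--(iii) and (v) are precisely the Cayley integral groups classified in \cite{ado}, and Cayley integral groups are desirable; and $S_3\times C_2\cong D_{12}$ is checked to be desirable by a finite verification (it admits only finitely many symmetric Schur rings), consistent with the remark in the introduction that, apart from $A_4$, every non-abelian group of order $12$ has all of its symmetric fusion schemes integral.

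For the ``only if'' direction, let $G$ be desirable. By Corollary~\ref{cor:8}, $|G|=2^a3^b$ and every element of $G$ has order in $\{1,2,3,4,6\}$, so $G$ is solvable with no element of order $8$, $9$ or $12$. If $G$ is abelian this forces $\exp G$ to divide $4$ or $6$, which is (i); so assume $G$ is non-abelian. If $G$ is a $2$-group then $\exp G\mid 4$, and a minimal non-abelian subgroup of $G$ is, by R\'edei's classification of minimal non-abelian $p$-groups, isomorphic to $Q_8$ or to $D_8$; here $D_8=\langle\rho,\sigma\mid\rho^4=\sigma^2=1,\ \sigma\rho\sigma=\rho^{-1}\rangle$ is undesirable, since $\{1\},\{\rho^2\},\{\rho,\rho^{-1}\},\{\sigma,\rho\sigma\},\{\rho^2\sigma,\rho^3\sigma\}$ is the basic-set partition of a symmetric Schur ring whose relation $\{\sigma,\rho\sigma\}$ is an $8$-cycle, with eigenvalue $\sqrt2\notin\mathbb{Z}$. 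Hence no section of $G$ is $D_8$, and a $2$-group of exponent at most $4$ with no section isomorphic to $D_8$ is abelian or isomorphic to $Q_8\times C_2^m$, giving (ii). If $G$ is a $3$-group then $\exp G=3$, and a non-abelian such group contains a copy of the extraspecial group $3^{1+2}$ of exponent $3$; one checks that $3^{1+2}$ of exponent $3$ is undesirable by exhibiting a symmetric fusion scheme of it with an irrational eigenvalue, so this case cannot occur.

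The remaining case is that $P\in\mathrm{Syl}_2(G)$ and $Q\in\mathrm{Syl}_3(G)$ are both nontrivial; both are desirable, so by the above $P$ is abelian of exponent dividing $4$ or $P\cong Q_8\times C_2^m$, and $Q$ is elementary abelian. I would first exclude $P\cong Q_8\times C_2^m$: a Frobenius normal-complement argument, using that $SL(2,3)$ is undesirable because $SL(2,3)/Z\cong A_4$, shows $Q\trianglelefteq G$; but $Q_8$ cannot act nontrivially on a $3$-group inside a desirable group---according to how $-1\in Q_8$ acts on a $Q_8$-chief factor of $Q$ this would produce a section isomorphic to $C_{12}$ (forbidden by Corollary~\ref{cor:8}) or to the generalized dihedral group $C_3^2\rtimes C_2$ of order $18$ (undesirable, by an explicit symmetric fusion scheme)---and a trivial action makes $Q_8\times C_3$ a subgroup, again forbidden by Corollary~\ref{cor:8}. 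So $P$ is abelian, and $\exp P\mid4$ together with the absence of elements of order $12$ sharply restricts the mutual action of $P$ and $Q$; ruling out the remaining configurations uses that $A_4$, $C_3^2\rtimes C_2$, $C_3\times S_3$, $S_3\times C_2^2$ (and a few further small groups, each shown undesirable by producing a symmetric fusion scheme with non-integral first eigenmatrix) cannot occur as sections of $G$. One is left with $G$ abelian of exponent dividing $6$---impossible since $G$ is non-abelian---or $|G|\in\{6,12\}$, and inspecting the non-abelian groups of those orders leaves exactly $S_3$, $S_3\times C_2$ and $C_3\rtimes C_4$.

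The main obstacle is this last case. Two kinds of input are needed and have to be produced essentially by hand: the representation-theoretic computations showing that each of a handful of specific small groups ($D_8$, $3^{1+2}$, $C_3^2\rtimes C_2$, $C_3\times S_3$, $S_3\times C_2^2$, \dots) admits a symmetric fusion scheme with a non-integral first eigenmatrix; and the group-theoretic verification that this list of obstructions is exhaustive, i.e.\ that every $\{2,3\}$-group all of whose element orders lie in $\{1,2,3,4,6\}$ and which is not on the list (i)--(v) contains one of the obstructions as a section. By comparison the $2$- and $3$-group cases, and the inclusion ``Cayley integral $\Rightarrow$ desirable'', are routine.
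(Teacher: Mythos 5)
Your overall strategy---heredity under subgroups and quotients plus a sieve by minimal undesirable ``obstruction'' groups, with Corollary~\ref{cor:8} controlling element orders---is the same philosophy as the paper's, but the execution has a genuine gap. Your obstruction list is incomplete in a way that breaks the argument: the group $(C_3\rtimes C_4)\times C_2$ of order $24$ has all element orders in $\{1,2,3,4,6\}$, abelian Sylow $2$-subgroup $C_4\times C_2$, cyclic Sylow $3$-subgroup, and every proper section desirable ($C_3\rtimes C_4$, $S_3\times C_2$, $C_6\times C_2$, \dots); it contains none of $D_8$, $A_4$, $3^{1+2}$, $C_3^2\rtimes C_2$, $C_3\times S_3$, $S_3\times C_2^2$ as a section, so it passes your sieve untouched. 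Yet it is undesirable: the paper's Lemma~\ref{lem:242} exhibits a symmetric fusion one of whose relations has minimal polynomial $\lambda(\lambda^2-4)(\lambda^2-16)(\lambda^2-12)$, and finding that witness is arguably the hardest single computation in the paper. Your parenthetical ``a few further small groups'' is exactly where the proof lives: the ad hoc partitions certifying undesirability of $C_3^2\rtimes C_2$, $C_3\times S_3$, $C_2^2\times S_3$, $(C_3\rtimes C_4)\times C_2$ and the exponent-$3$ group of order $27$ (the paper's whole Section~3) are not routine to produce, and you supply only the $D_8$ one.

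The exhaustiveness reduction in the mixed $\{2,3\}$ case is also left as a sketch (``a Frobenius normal-complement argument'', ``ruling out the remaining configurations''), whereas the paper organizes it quite differently: it splits on whether $G$ contains two non-commuting involutions and runs an induction on $|G|$ through a minimal normal subgroup (Lemmas~\ref{lem:30}, \ref{lem:33}, \ref{lem:50}, \ref{lem:60}, \ref{lem:70}, \ref{lem:80}), using only the undesirability of specific groups of orders $8$, $12$, $18$, $24$ and $27$. Your R\'edei-based treatment of the $2$-group case is a legitimate alternative to the paper's Baer--Dedekind argument, though the assertion that a minimal non-abelian $2$-subgroup is $Q_8$ or $D_8$ is not literally what R\'edei's classification gives (the groups $\gn{a,b\mid a^4=b^4=1, b^{-1}ab=a^{-1}}$ and $\gn{a,b\mid a^4=b^2=1, [a,b]\text{ central of order }2}$ are further minimal non-abelian groups of exponent $4$; they do have $D_8$ sections, so your conclusion survives, but the step needs that extra check). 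Finally, note that the theorem as stated is only the ``only if'' direction, so the converse discussion (Cayley integral implies desirable, plus a finite check for $S_3\times C_2$), while correct, is not required.
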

In Section~2, we prepare some terminologies on association schemes and groups.
In Section~3, we show a number of desirable groups that will be used in the proof of Theorem~\ref{thm:11}.
In Section~4, we give a proof of our main result, Theorem~\ref{thm:11}.

\section{Preliminaries}
Following \cite{Zie2} we prepare terminologies on association schemes.
Let $X$ be a finite set and $S$ a partition of $X\times X$.
We say that the pair $(X,S)$ is an \textit{association scheme} (or shortly \textit{scheme}) if it satisfies the following:
\begin{enumerate}
\item $1_X:=\{(x,x)\mid x\in X\}$ is an element of $S$;
\item For any $s$ in $S$, $s^*:=\{(y,x) \mid (x,y) \in s\}$ is an element of $S$;
\item For all $s,t,u\in S$ the size of $\{z\in X\mid (x,z)\in s, (z,y)\in t\}$ is constant whenever $(x,y)\in u$.
  The constant is denoted by $a_{stu}$.
\end{enumerate}
For the remainder of this section we assume that $(X,S)$ is an association scheme.
For $s\in S$ we define a matrix $\sigma_s$ over $\mathbb{C}$, which is called the \textit{adjacency matrix} of $s$,
whose rows and columns are indexed by the elements of $X$ as follows:
\[(\sigma_s)_{x,y}=\begin{cases}
    1 & \mbox{if $(x,y)\in s$}\\
    0 & \mbox{if $(x,y)\notin s$.}
  \end{cases}
\]
We shall write $\mathrm{ev}(A)$ as the set of all eigenvalues of a square matrix $A$ over $\mathbb{C}$.
We say that $(X,S)$ is \textit{integral} if $\bigcup_{s\in S}\mathrm{ev}(\sigma_s)\subseteq \mathbb{Z}$.
\begin{remark}
  The first eigenmatrix of $(X,S)$ is defined when $(X,S)$ is commutative, i.e., $\sigma_s\sigma_t=\sigma_t\sigma_s$ for all $s, t\in S$.
  Then the first eigenmatrix of $(X,S)$ is integral if and only if $(X,S)$ is integral.
\end{remark}
We say that $(X,S)$ is
\textit{symmetric} if $\sigma_s$ is symmetric for each $s\in S$, and
\textit{desirable} if $(X,T)$ is integral for each symmetric fusion scheme $(X,T)$ of $(X,S)$.

For a finite group $G$ we set
\[\tilde{G}=\{\tilde{g}\mid g\in G\}\]
where $\tilde{g}=\{(a,b)\in G\times G\mid ag=b\}$.
It is well-known that $(G,\tilde{G})$ is an association scheme (see \cite[Appendix]{Zie1}).
We say that $G$ is \textit{desirable} if $(G,\tilde{G})$ is desirable.

Following \cite{Zie2} we introduce a concept which corresponds to blocks in permutation groups.
For $T\subseteq S$ we say that $T$ is \textit{closed} if
\[\mbox{$\{u\in S\mid a_{stu}>0\}\subseteq T$ for all $s,t\in T$,}\] equivalently,
$\bigcup_{t\in T}t$ is an equivalence relation on $X$ since each digraph $(X,t)$ has a directed cycle because of $|X|<\infty$.
We shall write the equivalence class containing $x$ by $\bigcup_{t\in T}t$ as $xT$.
It is well-known (see \cite[1.5]{Zie1}) that $(xT,\{t\cap (xT\times xT)\mid t\in T\})$
forms an association scheme, which is denoted by $(X,S)_{xT}$, and
the quotient set $X/T$ forms an association scheme, called the \textit{factor scheme} of $(X,S)$ over $T$,
denoted by $(X/T,S\Qd T)$ where
\[\mbox{
    $S\Qd T=\{s^T\mid s\in S\}$ and
    $s^T:=\{(xT,yT)\mid (xT\times yT)\cap s\ne \emptyset\}$.}\]
\begin{lem}\label{lem:5}
  Let $(X,S)$ be a desirable scheme, $x\in X$ and $T$ a closed subset of $S$.
  Then both of $(X,S)_{xT}$ and $(X/T,S\Qd T)$ are desirable.
\end{lem}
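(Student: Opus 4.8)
The plan is to handle the two assertions in parallel: in each case start from an \emph{arbitrary} symmetric fusion scheme of the smaller scheme, lift it to a symmetric fusion scheme of $(X,S)$, which is integral by hypothesis, and then push integrality back down by comparing adjacency matrices. Throughout I would use the fact that every eigenvalue of a $\{0,1\}$-matrix is an algebraic integer, so that it suffices to exhibit the relevant eigenvalues as \emph{rational} numbers.

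For the factor scheme $(X/T,S\Qd T)$: let $(X/T,U)$ be a symmetric fusion scheme of it. For $u\in U$ set $\hat u=\{(a,b)\in X\times X\mid (aT,bT)\in u\}$, the full preimage of $u$ under the natural surjection $X\times X\to X/T\times X/T$. Since $u$ is a union of relations $s^T$ and the preimage of each such $s^T$ is a union of elements of $S$ (a standard property of factor schemes), $\hat U=\{\hat u\mid u\in U\}$ is a partition of $X\times X$ into unions of elements of $S$; a brief structure-constant computation, yielding $a_{\hat u_1\hat u_2\hat u_3}=|xT|\,a_{u_1u_2u_3}$, shows that $(X,\hat U)$ is an association scheme, hence a fusion scheme of $(X,S)$, and it is symmetric because $\widehat{u^*}=\hat u^{\,*}$ and $U$ is symmetric. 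By the desirability of $(X,S)$, $(X,\hat U)$ is integral. Now let $N$ be the $X\times (X/T)$ incidence matrix of the partition of $X$ into $T$-classes; then $N^{t}N=|xT|\,I$ and $\sigma_{\hat u}=N\sigma_uN^{t}$ for every $u\in U$. Hence if $\sigma_u\xi=\lambda\xi$ with $\xi\ne 0$, then $\sigma_{\hat u}(N\xi)=|xT|\lambda\,(N\xi)$ with $N\xi\ne 0$ (as $N$ has full column rank), so $|xT|\lambda\in\mathrm{ev}(\sigma_{\hat u})\subseteq\mathbb{Z}$; thus $\lambda\in\mathbb{Q}$, and being an algebraic integer it lies in $\mathbb{Z}$. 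Therefore $(X/T,U)$ is integral, and $(X/T,S\Qd T)$ is desirable.

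For the subscheme $(X,S)_{xT}$: let $V$ be a symmetric fusion scheme of it. Identifying each relation of $(X,S)_{xT}$ with the unique $t\in T$ it arises from, each $v\in V$ corresponds to a subset $T_v\subseteq T$; put $\hat v=\bigcup_{t\in T_v}t$, so $\hat v$ is a union of elements of $S$ contained in the equivalence relation $\bigcup_{t\in T}t$, and $\hat v\cap(xT\times xT)=v$. The delicate choice is how to extend this to a fusion of $(X,S)$: rather than leaving $S\setminus T$ untouched (which need not give a scheme, as fusing a closed subset does not generally extend to the whole scheme), I would collapse \emph{all} of $S\setminus T$ into the single relation $e:=(X\times X)\setminus\bigcup_{t\in T}t$. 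One checks that $\{\hat v\mid v\in V\}\cup\{e\}$ satisfies the scheme axioms: every mixed structure constant involving $e$ is forced either to vanish or to reduce to an obvious ``count the points outside a block'' constant, precisely because $\bigcup_{t\in T}t$ is an equivalence relation and $T$ is closed. Thus this is a symmetric fusion scheme $(X,W)$ of $(X,S)$ (symmetric since $V$ is and $e$ is), so $(X,W)$ is integral by the desirability of $(X,S)$, and each $\sigma_{\hat v}$ has integral eigenvalues. Finally, since $\bigcup_{t\in T}t$ is an equivalence relation, $\sigma_{\hat v}$ is block-diagonal with one block per $T$-class, and the block at $xT$ is exactly $\sigma_v$; hence $\mathrm{ev}(\sigma_v)\subseteq\mathrm{ev}(\sigma_{\hat v})\subseteq\mathbb{Z}$. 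So $(xT,V)$ is integral and $(X,S)_{xT}$ is desirable.

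The main obstacle is the subscheme case: one must resist lifting $V$ by keeping the relations outside $T$ fixed and instead collapse the complement of $\bigcup_{t\in T}t$ to a single relation, and then verify that this coarse extension really is a scheme — the one place a calculation is needed, and it succeeds exactly because $T$ is closed. The factor-scheme case is comparatively routine once one recalls the ``rational algebraic integer'' step, which is genuinely needed because $\sigma_{\hat u}=N\sigma_uN^{t}$ only gives $|xT|\,\mathrm{ev}(\sigma_u)\subseteq\mathrm{ev}(\sigma_{\hat u})$, i.e.\ integrality only up to the scaling by the block size.
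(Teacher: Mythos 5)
Your proposal follows essentially the same route as the paper: for the subscheme you collapse all of $S\setminus T$ into the single relation $(X\times X)\setminus\bigcup_{t\in T}t$ and read off integrality from the block-diagonal structure, and for the factor scheme you lift $U$ to a fusion of $(X,S)$ and recover integrality from the ``rational algebraic integer'' step (the paper phrases the same computation via the conjugacy of the lifted adjacency matrix to $\sigma_{u^T}\otimes J$). The one slip is in the factor-scheme case: $\hat U=\{\hat u\mid u\in U\}$ is not literally a fusion scheme of $(X,S)$, because $\widehat{1_{X/T}}$ is the whole equivalence relation $\bigcup_{t\in T}t$ rather than the diagonal $1_X$; you must split it into $1_X$ and $\bigcup_{t\in T,\,t\ne 1_X}t$, as the paper does, after which your eigenvalue argument applies verbatim to every $\hat u$ with $u\ne 1_{X/T}$ (and $u=1_{X/T}$ is trivial).
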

\begin{proof}
  Let $(xT,U)$ be a symmetric fusion scheme of $(X,S)_{xT}$ where $U$ is a partition of $xT\times xT$.
  Since each $u\in U$ is a union of elements of the restrictions of $T$ to $xT\times xT$,
  it allows us to fuse elements of $S$ as follows:
  \[\left\{\bigcup_{s\in S\setminus T}s\right\} \cup
    \left\{ \bigcup_{s\in S;s\cap u\ne\emptyset}s
      ~\middle\vert~
      u\in U\right\}\]
  which forms a symmetric fusion scheme of $(X,S)$.
  
  Let $|X/T|=m$.
  Notice that $\bigcup_{s\in S;s\cap u\ne\emptyset}s$ is contained in $\bigcup_{i=1}(x_iT \times x_iT)$ where $x_iT$ with $i=1, \ldots, m$ are the equivalence classes induced by $T$.
  Thus, the characteristic polynomial of $\bigcup_{s \in S; s \cap u \not = \emptyset}s$ is the product of those of $(\bigcup_{s \in S; s \cap u \not = \emptyset}s) \cap (x_iT \times x_iT)$ with $i=1, \ldots, m$ which are mutually equal since $(X,S)_{x_iT}$ have the same structure constants.
  This implies that the characteristic polynomial of $\bigcup_{s \in S; s \cap u \not = \emptyset}s$ is equals the $m$-the power of that of $u$.
  Since $(X,S)$ is desirable, the eigenvalues of the adjacency matrix of $\bigcup_{s\in S; s\cap u\ne\emptyset}s$ are all integers.
  Therefore, $\mathrm{ev}(\sigma_u)\subseteq \mathbb{Z}$ for each $u\in U$.

  Let $(X/T,U)$ be a symmetric fusion of $(X/T,S\Qd T)$, and for $u \in S$ let $A_{TuT}$ denote the adjacency matrix of $\bigcup_{s\in S;s^T\subseteq u^T}s$.
  Notice that $\bigcup_{s \in TuT}s$ is a union of some of the $\{x_iT \times x_jT \mid i,j=1, \ldots, m\}$.
  Moreover, $(x_iT \times x_jT) \subseteq \bigcup_{s \in TuT}s$ if and only if $(x_iT,x_jT) \in u^T$.
  This implies that $A_{TuT}$ is conjugate to
  the Kronecker's product $\sigma_{u^T}\otimes J$
  under the group of permutation matrices
  where $J$ is the all one matrix of degree $m$,
  so that it can be easily checked that
  \[\left\{1_X,\bigcup_{t\in T;t\ne 1_X}t\right\}\cup
    \left\{\bigcup_{s\in S;s^T\subseteq u^T}s
      ~\middle\vert~
      u^T\in U\setminus \{1_{X/T}\}\right\}\]
  is a symmetric fusion of $(X,S)$.

  Since each of the eigenvalues of $A_{TuT}$
  is an integral multiple of an eigenvalue of $\sigma_{u^T}$,
  it follows from the fact that each eigenvalue of $\sigma_{u^T}$ is an algebraic integer
  that $\mathrm{ev}(\sigma_{u^T})\subseteq \mathbb{Z}$ for each $u^T\in U$.
  Therefore, $(X/T,S\Qd T)$ is desirable.
\end{proof}
We frequently use the following without mentioning.
\begin{cor}\label{cor:5}
  Any subgroup or any homomorphic image of a desirable group is desirable.
\end{cor}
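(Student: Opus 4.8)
The plan is to realize the group scheme of a subgroup (respectively, of a quotient) of $G$ as a subscheme (respectively, a factor scheme) of $(G,\tilde G)$ with respect to a suitable closed subset, and then to apply Lemma~\ref{lem:5}. Since a homomorphic image of $G$ is isomorphic to $G/N$ for a normal subgroup $N\trianglelefteq G$, it suffices to treat subgroups and quotient groups.

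First I would handle a subgroup $H\le G$. From $(a,b)\in\tilde{g_1}$ and $(b,c)\in\tilde{g_2}$ one gets $ag_1g_2=c$, so $\sigma_{\tilde{g_1}}\sigma_{\tilde{g_2}}=\sigma_{\widetilde{g_1g_2}}$; hence $T:=\{\tilde h\mid h\in H\}$ is a closed subset of $\tilde G$, and $\bigcup_{t\in T}t=\{(a,b)\mid a^{-1}b\in H\}$ has the left cosets of $H$ as its equivalence classes. Fix $a\in G$ and take the point $x=a$, so that $xT=aH$. I claim the bijection $aH\to H$, $ah\mapsto h$, induces an isomorphism from $(G,\tilde G)_{aH}$ onto $(H,\tilde H)$: its relations are the $\tilde h\cap(aH\times aH)$ with $h\in H$, and $(ah_1,ah_2)\in\tilde h$ if and only if $h_1h=h_2$, which is exactly the defining condition of $\tilde h$ in $(H,\tilde H)$. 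By Lemma~\ref{lem:5} the subscheme $(G,\tilde G)_{aH}$ is desirable, and therefore so is $(H,\tilde H)$; that is, $H$ is desirable.

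Next I would handle $G/N$ with $N\trianglelefteq G$. As above, $T:=\{\tilde n\mid n\in N\}$ is a closed subset of $\tilde G$ whose equivalence classes are the cosets $aN$. For $g\in G$ one checks that $(aN\times bN)\cap\tilde g\ne\emptyset$ if and only if there are $n_1,n_2\in N$ with $an_1g=bn_2$, and since $b^{-1}an_1g=(b^{-1}ag)(g^{-1}n_1g)$ with $g^{-1}n_1g\in N$, this holds if and only if $b^{-1}ag\in N$. Hence $\tilde g^{\,T}=\{(aN,bN)\mid (aN)(gN)=bN\}$, and from $(N,bN)\in\tilde g^{\,T}$ one recovers $bN=gN$, so $g\mapsto\tilde g^{\,T}$ descends to a bijection $G/N\to\tilde G\Qd T$. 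Thus the identity map on $G/N$ gives an isomorphism between the factor scheme $(G/T,\tilde G\Qd T)$ and the group scheme $(G/N,\widetilde{G/N})$. Since $(G,\tilde G)$ is desirable, Lemma~\ref{lem:5} shows $(G/T,\tilde G\Qd T)$ is desirable, whence $G/N$ is desirable.

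The content of the argument is not deep: everything reduces to the two identifications above — of the subscheme on a coset $aH$ with $(H,\tilde H)$, and of the factor scheme modulo $\tilde N$ with $(G/N,\widetilde{G/N})$ — together with the verification that $\tilde H$ and $\tilde N$ are closed subsets. The only place that requires genuine attention is the quotient case, where the normality of $N$ is precisely what lets one absorb the factors coming from $N$ and conclude that $\tilde g^{\,T}$ depends only on the coset $gN$; I do not expect any obstacle beyond this bookkeeping with the relations $\tilde g$.
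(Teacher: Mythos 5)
Your proof is correct and follows essentially the same route as the paper: identify $\tilde H$ (resp.\ $\tilde N$) as a closed subset, recognize the induced subscheme (resp.\ factor scheme) as the group scheme of $H$ (resp.\ $G/N$), and apply Lemma~\ref{lem:5} together with the homomorphism theorem. You merely spell out the verifications that the paper leaves implicit.
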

\begin{proof}
  Let $G$ be a finite group and $H$ a subgroup of $G$.
  Then $\tilde{H}$ is a closed subset of $\tilde{G}$, and if $H$ is normal in $G$,
  then $(G/H,\tilde{G/H})$ is isomorphic to the factor scheme of $(G,\tilde{G})$ over $\tilde{H}$.
  Applying Lemma~\ref{lem:5} with the homomorphism theorem in group theory we obtain the result.
\end{proof}
\begin{cor}\label{cor:8}
  The order of any element of a desirable group belongs to the set $\{1,2,3,4,6\}$.
  In particular, the order of a desirable group equals $2^a3^b$ for some nonnegative integers $a,b$.
\end{cor}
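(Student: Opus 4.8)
The plan is to reduce the statement to cyclic subgroups and then to invoke a classical fact about $2\cos(2\pi/n)$. By Corollary~\ref{cor:5} every subgroup of a desirable group is desirable, so if $G$ is desirable and $g\in G$ has order $n$, then the cyclic group $\langle g\rangle\cong C_n$ is desirable. Thus it suffices to prove that $C_n$ is undesirable whenever $n\notin\{1,2,3,4,6\}$; granting this, the second assertion follows, because a prime $p\ge 5$ dividing $|G|$ would, by Cauchy's theorem, produce an element of order $p\notin\{1,2,3,4,6\}$.

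Fix $n\ge 3$ (the cases $n=1,2$ already lie in the target set) and write $C_n=\langle g\rangle$. First I would exhibit one convenient symmetric fusion of $(C_n,\widetilde{C_n})$: take the partition of $C_n\times C_n$ into $1_{C_n}$ together with the classes $r_k=\{(a,b)\mid a^{-1}b\in\{g^k,g^{-k}\}\}$ for $1\le k\le\lfloor n/2\rfloor$. These are precisely the orbitals of the group $C_n\rtimes\langle\iota\rangle$ of order $2n$ acting on $C_n$, where $C_n$ acts by right translation and $\iota$ is the inversion automorphism $x\mapsto x^{-1}$; this action is well defined exactly because $C_n$ is abelian. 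Since this overgroup contains the regularly acting $C_n$, the discussion in the Introduction shows that $\{1_{C_n}\}\cup\{r_1,\dots,r_{\lfloor n/2\rfloor}\}$ is a fusion scheme of $(C_n,\widetilde{C_n})$, and it is symmetric by construction.

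Then I would compute an eigenvalue. The class $r_1=\tilde g\cup\tilde g^{*}$ is a genuine union of two distinct relations since $n\ge 3$, and its adjacency matrix is $\sigma_{\tilde g}+\sigma_{\tilde g^{*}}$; here $\sigma_{\tilde g}$ is the permutation matrix of an $n$-cycle with eigenvalues $\zeta^{j}$ $(0\le j<n)$, where $\zeta=e^{2\pi i/n}$, and $\sigma_{\tilde g^{*}}=\sigma_{\tilde g}^{-1}$ commutes with it, so $\sigma_{\tilde g}+\sigma_{\tilde g^{*}}$ has eigenvalues $\zeta^{j}+\zeta^{-j}=2\cos(2\pi j/n)$. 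If $C_n$ were desirable, then in particular $2\cos(2\pi/n)=\zeta+\zeta^{-1}$ would be an integer; but $\zeta+\zeta^{-1}$ generates the maximal real subfield of $\mathbb{Q}(\zeta)$, which has degree $\varphi(n)/2$ over $\mathbb{Q}$, so, being an algebraic integer, it lies in $\mathbb{Z}$ only when $\varphi(n)\le 2$, i.e. only when $n\in\{1,2,3,4,6\}$. This contradiction proves the first assertion, and the ``in particular'' clause follows as indicated above.

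The step I expect to need the most care is verifying that the coarsened partition $\{1_{C_n}\}\cup\{r_k\}$ is genuinely an association scheme, since an arbitrary coarsening of a scheme need not be one; I would settle this through the orbital description above rather than by computing structure constants directly, or, alternatively, by appealing to the general fact that the symmetric closure $\{s\cup s^{*}\mid s\in S\}$ of a commutative scheme is always a fusion scheme. The arithmetic input---that $[\mathbb{Q}(\zeta+\zeta^{-1}):\mathbb{Q}]=\varphi(n)/2$ and that $\varphi(n)\le 2$ holds exactly for $n\in\{1,2,3,4,6\}$---is entirely standard.
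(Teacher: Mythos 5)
Your proposal is correct and follows essentially the same route as the paper: restrict to the cyclic subgroup $\gn{x}$ via Corollary~\ref{cor:5}, pass to the symmetrization $\{\tilde{y}\cup\tilde{z}\mid yz=1\}$ as a symmetric fusion, and observe that its eigenvalues $2\cos(2\pi k/n)$ are integral only for $n\in\{1,2,3,4,6\}$. You merely supply details the paper leaves implicit (the orbital justification that the symmetrization is a fusion scheme, and the degree $\varphi(n)/2$ argument), which is fine.
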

\begin{proof}
  Let $G$ be a desirable group and $x\in G$ has order $n$.
  By Corollary~\ref{cor:5}, $H:=\gn{x}$ is desirable.
  Since the symmetrization $\{\tilde{y}\cup\tilde{z}\mid y,z\in H;yz=1\}$ forms a symmetric fusion of $(H,\tilde{H})$
  and $\mathrm{ev}(\sigma_{\tilde{y}\cup\tilde{z}})=\{2\cos(2\pi k/n)\mid k\in\mathbb{Z}\}$ with $yz=1$,
  it follows that $n\in \{1,2,3,4,6\}$.
\end{proof}

\section{Undesirable groups of small orders}
By Corollary~\ref{cor:8}, every desirable group has order $2^a3^b$ for some nonnegative integers $a,b$.
But, the converse does not necessarily hold. In this section we collect some undesirable groups of such orders.

\begin{lem}\label{lem:di}
  The dihedral group of order $8$ is undesirable.
\end{lem}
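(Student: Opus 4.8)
The plan is to produce one explicit symmetric fusion scheme of $(D_8,\tilde D_8)$ that has a non-integral eigenvalue. Write $D_8=\gn{r,s\mid r^4=s^2=1,\ srs=r^{-1}}$. As recorded after Problem~\ref{prob:2}, fusions of $(D_8,\tilde D_8)$ correspond to Schur rings over $D_8$: a partition $\mathcal P$ of $D_8$ with $\{1\}\in\mathcal P$, every member inverse-closed, and $\sum_{P\in\mathcal P}\mathbb Z\,\underline P$ a subring of $\mathbb Z[D_8]$ (where $\underline P=\sum_{g\in P}g$) yields a symmetric fusion scheme whose relation attached to $P$ has adjacency matrix equal to that of the Cayley graph $\mathrm{Cay}(D_8,P)$ taken with respect to right multiplication.

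First I would take
\[\mathcal P=\bigl\{\,\{1\},\ \{r^2\},\ \{r,r^3\},\ \{s,rs\},\ \{r^2s,r^3s\}\,\bigr\}\]
and check it is of this type. Inverse-closedness is immediate: $r^{-1}=r^3$, and each of $s,rs,r^2s,r^3s$ is an involution. For the ring condition one computes the products of the four non-identity class sums; using $sr^k=r^{-k}s$ this is a short routine check — for instance $\underline{\{s,rs\}}^{\,2}=2\cdot\underline{\{1\}}+\underline{\{r,r^3\}}$ and $\underline{\{r,r^3\}}\cdot\underline{\{s,rs\}}=\underline{\{s,rs\}}+\underline{\{r^2s,r^3s\}}$, and the remaining products are analogous, each landing in $\sum_{P\in\mathcal P}\mathbb Z_{\ge 0}\,\underline P$. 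Hence $\mathcal P$ determines a symmetric fusion scheme $(D_8,T)$ of rank $5$.

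It then remains to see that $(D_8,T)$ is not integral, and for this I would look at the relation coming from $\{s,rs\}$. Since $\{s,rs\}$ generates $D_8$ and consists of two distinct involutions, $\mathrm{Cay}(D_8,\{s,rs\})$ is a connected simple $2$-regular graph on $8$ vertices, hence an $8$-cycle, so $2\cos(\pi/4)=\sqrt2$ is among its eigenvalues. (Alternatively: in the decomposition of the regular representation, the $2$-dimensional irreducible representation $\rho$ with $\rho(r)=\bigl(\begin{smallmatrix}0&-1\\1&0\end{smallmatrix}\bigr)$ and $\rho(s)=\bigl(\begin{smallmatrix}1&0\\0&-1\end{smallmatrix}\bigr)$ contributes the eigenvalues of $\rho(s)+\rho(rs)=\bigl(\begin{smallmatrix}1&1\\1&-1\end{smallmatrix}\bigr)$, namely $\pm\sqrt2$.) Thus $\sqrt2\notin\mathbb Z$ is an eigenvalue of an adjacency matrix of $(D_8,T)$, so this symmetric fusion scheme of $(D_8,\tilde D_8)$ is not integral and $D_8$ is undesirable.

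I do not expect a genuine obstacle here: the construction is explicit, and the only step requiring care is confirming that $\sum_{P\in\mathcal P}\mathbb Z\,\underline P$ is multiplicatively closed, i.e.\ that $\mathcal P$ really is a Schur ring and not merely an inverse-closed partition — the ten-product bookkeeping indicated above. One could instead search for a coarser such partition containing $\{s,rs\}$, but rank $5$ already keeps this verification short, so I would not pursue that.
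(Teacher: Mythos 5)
Your proposal is correct and is essentially the paper's own proof: the authors use exactly this rank-$5$ inverse-closed partition of $D_8$ (up to relabelling the generators) and observe that the relation coming from the pair of reflections is the octagon, whose eigenvalues are not all integral. The only difference is that you spell out the Schur-ring (structure-constant) verification and the eigenvalue computation, which the paper leaves implicit.
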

\begin{proof}
  Let $G=\gn{x,y\mid x^4=y^2=1, yxy=x^{-1}}$ be the dihedral group of order $8$.
  Then the following partition of $G$ induces a symmetric fusion of $(G,\tilde{G})$:
  \[\{\{1\},\{x^2\}, \{x,x^3\}, \{y,yx\}, \{yx^2,yx^3\}\}.\]
  Since $\tilde{y}\cup\tilde{yx}$ forms the octagon whose eigenvalues are not all integral,
  the dihedral group of order $8$ is undesirable.
\end{proof}

\begin{lem}\label{lem:al}
  The alternating group of degree $4$ is undesirable.
\end{lem}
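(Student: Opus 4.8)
The plan is to follow the pattern of Lemma~\ref{lem:di}: produce a single symmetric fusion scheme of $(A_4,\tilde{A_4})$ whose adjacency matrix is that of the icosahedron, and then invoke a non-integral eigenvalue. Recall that the icosahedron is a $12$-vertex, $5$-regular distance-regular graph of diameter $3$ with characteristic polynomial $(t-5)(t+1)^5(t^2-5)^3$, so in particular $\sqrt 5\in\mathrm{ev}$ of its adjacency matrix. Since $A_4$ also has order $12$, the first task is to realize the icosahedron as a Cayley graph over $A_4$.

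To do that I would use the embedding $A_4\le A_5$, where $A_5$ is the rotation group of the icosahedron and acts transitively on its $12$ vertices with point stabilizers of order $5$; as $\gcd(|A_4|,5)=1$, the subgroup $A_4$ meets every point stabilizer trivially and hence acts regularly on the vertex set. Identifying the vertices with the elements of $A_4$ so that the base vertex is $1$, the icosahedron becomes the Cayley graph of $A_4$ with connection set $S=\{g\in A_4\mid g\text{ is adjacent to }1\}$, a symmetric set of size $5$; a suitable choice of coordinates gives $S=\{(1\,2)(3\,4),(1\,2\,3),(1\,3\,2),(1\,3\,4),(1\,4\,3)\}$. Now turn this into a fusion: left translations by $A_4$ are automorphisms of the Cayley graph, hence preserve distances, and since the icosahedron is distance-regular of diameter $3$, the relations $R_i=\{(a,b)\in A_4\times A_4\mid a^{-1}b\in D_i\}$ with $D_i=\{g\in A_4\mid \mathrm{dist}(1,g)=i\}$, $0\le i\le 3$, form a symmetric association scheme on $A_4$. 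Each $R_i$ is a union of the relations $\tilde g$, so $\{R_0,R_1,R_2,R_3\}$ is a symmetric fusion scheme of $(A_4,\tilde{A_4})$ with $\sigma_{R_1}$ the adjacency matrix of the icosahedron; therefore $\sqrt 5\in\mathrm{ev}(\sigma_{R_1})$ and $A_4$ is undesirable.

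I expect the main obstacle to be making the fusion explicit and rigorous, i.e.\ verifying that the partition $\{\{1\},S,D_2,D_3\}$ of $A_4$ is closed under the structure constants (equivalently, is a Schur ring over $A_4$) rather than merely symmetric. This amounts either to quoting distance-regularity of the icosahedron, or, for a self-contained argument, to multiplying out the four class sums in $\mathbb{Z}[A_4]$ and checking that the products lie in their $\mathbb{Z}$-span, together with the direct computation that the characteristic polynomial of $\sigma_S$ equals $(t-5)(t+1)^5(t^2-5)^3$. Recalling the spectrum of the icosahedron, e.g.\ from \cite{bcn}, is then routine.
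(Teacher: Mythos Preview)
Your proposal is correct and follows essentially the same approach as the paper: both exhibit the icosahedron as a Cayley graph over $A_4$ (the paper uses the connection set $\{(13)(24),(123),(132),(124),(142)\}$, conjugate to yours by $(2\,3)$) and take its distance partition as the required symmetric fusion with a non-integral eigenvalue. Your write-up is more detailed, supplying the $A_4\le A_5$ regularity argument and invoking distance-regularity to justify the fusion axioms, where the paper simply records the partition $\{\{1\},\{(12)(34)\},\{(13)(24),(123),(132),(124),(142)\},\{(14)(23),(134),(143),(234),(243)\}\}$ and asserts the result.
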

\begin{proof}
  The following partition of $A_4$ induces a symmetric fusion of $(A_4, \tilde{A}_4)$:
  \[\{\{1\}, \{(12)(34)\},\{(13)(24), (123),(132), (124),(142)\},\]
  \[\{(14)(23), (134),(143), (234),(243)\}\}.\]
  The third one induces the icosahedron, whose eigenvalues are not necessarily integral.
\end{proof}
\begin{lem}\label{lem:181}
  The semidirect product $(C_3\times C_3)\rtimes C_2$ by the action of the inverse map
  is undesirable.
\end{lem}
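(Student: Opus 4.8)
Write $G=\gn{a,b,c\mid a^3=b^3=c^2=1,\ ab=ba,\ cac=a^{-1},\ cbc=b^{-1}}$ and put $N=\gn{a,b}$, $H=\gn{ab}$, so that $N\cong C_3\times C_3$ is normal of index $2$ and $H$ is normal of order $3$. The plan is to exhibit one symmetric fusion scheme of $(G,\tilde G)$ that fails to be integral. Consider the partition
\[
\big\{\{1\},\ H\setminus\{1\},\ N\setminus H,\ D,\ Nc\setminus D\big\},\qquad D:=\{c,ac,bc\}.
\]
Every block is closed under inversion, since $H\setminus\{1\}$ and $N\setminus H$ are complements of subgroups of the abelian group $N$, while $D$ and $Nc\setminus D$ consist of involutions. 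To confirm that this partition induces a fusion scheme of $(G,\tilde G)$, I would write $A_T=\sum_{t\in T}\sigma_{\tilde t}$ for each block $T$ and check that the span of the five matrices $A_T$ is closed under multiplication; this comes down to a short list of products computed in $\mathbb{C}[G]$ by means of $cn=n^{-1}c$ ($n\in N$), the substantial ones being $A_D^2=3A_{\{1\}}+A_{N\setminus H}$ and $A_DA_{N\setminus H}=2A_D+2A_{Nc\setminus D}$ (note that $D$ being a block forces $N\setminus H$ to be one).

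Next I would compute $\mathrm{ev}(\sigma_D)$, where $\sigma_D$ is the adjacency matrix of $\mathrm{Cay}(G,D)$, a connected cubic bipartite graph on $18$ vertices (in fact the Pappus graph). Since $\sigma_D=\sum_{d\in D}R_d$ with $R_g$ the permutation matrix of right translation by $g$, its eigenvalues are obtained from the decomposition of the right regular representation of $G$ into irreducibles. By Clifford theory over $N$, the degree-$2$ irreducible representations of $G$ are the induced representations $\rho_\lambda=\mathrm{Ind}_N^G\lambda$ with $\lambda\in\mathrm{Irr}(N)$, $\lambda\neq\bar\lambda$; in a suitable basis $\rho_\lambda(n)=\mathrm{diag}(\lambda(n),\bar\lambda(n))$ for $n\in N$ and $\rho_\lambda(c)=\left(\begin{smallmatrix}0&1\\1&0\end{smallmatrix}\right)$, whence
\[
\sum_{d\in D}\rho_\lambda(d)=\begin{pmatrix}0&\beta_\lambda\\ \overline{\beta_\lambda}&0\end{pmatrix},\qquad \beta_\lambda:=1+\lambda(a)+\lambda(b),
\]
so that $\pm|\beta_\lambda|\in\mathrm{ev}(\sigma_D)$. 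Taking $\lambda$ with $\lambda(a)=\omega^2$, $\lambda(b)=1$ (where $\omega=e^{2\pi i/3}$; this $\lambda$ satisfies $\lambda\neq\bar\lambda$) gives $\beta_\lambda=2+\omega^2$ and $|\beta_\lambda|^2=(2+\omega^2)(2+\omega)=3$, so $\sqrt3\in\mathrm{ev}(\sigma_D)$. (Equivalently one may quote the spectrum $\{3,\sqrt3^{(6)},0^{(4)},(-\sqrt3)^{(6)},-3\}$ of the Pappus graph.)

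Hence the symmetric fusion scheme displayed above is not integral, so $(G,\tilde G)$ is not desirable. The only delicate point is the choice of the partition: $D$ must be a $3$-element subset of $Nc$ whose Cayley graph carries a non-integral eigenvalue, and the rest of the partition is then essentially forced; after that, verifying the Schur condition and computing $\mathrm{ev}(\sigma_D)$ are routine.
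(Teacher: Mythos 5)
Your proof is correct and follows essentially the same route as the paper: you exhibit the same five-class symmetric fusion (your partition is carried to the paper's by the group automorphism $x\mapsto ab$, $y\mapsto a$, $z\mapsto c$), whose $3$-element class of involutions induces the Pappus graph with $\pm\sqrt{3}$ among its eigenvalues. The only difference is presentational: the paper simply quotes the intersection array $\{3,2,2,1;1,1,2,3\}$, while you verify the Schur-ring closure and compute the irrational eigenvalue via the induced representations of $N$.
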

\begin{proof}
  Let $\gn{x,y,z\mid x^3=y^3=[x,y]=zxzx=zyzy=z^2=1}$ denote the group given in the statement.
  Then the following partition of $G$ induces a symmetric fusion of $(G,\tilde{G})$:
  \[\{\{1\},\{x,x^2\}, \{y,xy,x^2y, y^2,xy^2,x^2y^2\}, \{z,yz,xy^2z\},\]
  \[ \{xz,x^2z,y^2z,xyz,x^2yz,x^2y^2z\}\}.\]
  Since $\tilde{z}\cup\tilde{yz}\cup\tilde{xy^2z}$ forms a distance-regular graph with intersection array $\{3,2,2,1;1,1,2,3\}$
  whose eigenvalues are not all integral,
  the statement holds.
\end{proof}

\begin{lem}\label{lem:182}
  The direct product $S_3\times C_3$ is undesirable.
\end{lem}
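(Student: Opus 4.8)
The plan is to proceed exactly as in Lemmas~\ref{lem:di}, \ref{lem:al} and \ref{lem:181}: I will write down an explicit partition of $G:=S_3\times C_3$ which induces a symmetric fusion of $(G,\tilde G)$ and which has a class whose graph has a non-integral eigenvalue.

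Write $S_3=\langle a,b\mid a^3=b^2=1,\ bab=a^{-1}\rangle$ and $C_3=\langle c\rangle$, so that each element of $G$ is uniquely of the form $a^ic^j$ or $a^ibc^j$ with $0\le i,j\le 2$, and set
\[
C_0=\{1\},\qquad C_1=\{a,a^2\},\qquad C_2=\{c,c^2,ac,a^2c,ac^2,a^2c^2\},
\]
\[
C_3=\{ab,bc,bc^2\},\qquad C_4=\{b,a^2b,abc,a^2bc,abc^2,a^2bc^2\}.
\]
I would first dispose of the easy points: $\{C_0,\dots,C_4\}$ is a partition of $G$ (note $C_0\cup C_1\cup C_2=\langle a,c\rangle$ and $C_3\cup C_4=b\langle a,c\rangle$), and each $C_i$ is closed under inversion — here one uses that $a^ibc^j$ is an involution exactly when $j=0$, while $(bc)^{-1}=bc^2$, $(abc)^{-1}=abc^2$, and so on. The substantive step is to check that this partition actually defines a Schur ring over $G$, i.e.\ that every product $\sigma_{C_i}\sigma_{C_j}$ of adjacency matrices is a nonnegative integral combination of $\sigma_{C_0},\dots,\sigma_{C_4}$; I expect this bookkeeping — most cleanly carried out via the rules $(a^ibc^k)(a^jbc^l)=a^{i-j}c^{k+l}$, $(a^ibc^k)(a^jc^l)=a^{i-j}bc^{k+l}$, $(a^ic^k)(a^jbc^l)=a^{i+j}bc^{k+l}$, $(a^ic^k)(a^jc^l)=a^{i+j}c^{k+l}$ — to be the main (if routine) obstacle. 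Among the identities it produces are
\[
\sigma_{C_3}^2=3\sigma_{C_0}+\sigma_{C_2},\qquad \sigma_{C_3}\sigma_{C_2}=2\sigma_{C_3}+2\sigma_{C_4},\qquad \sigma_{C_3}\sigma_{C_4}=3\sigma_{C_1}+2\sigma_{C_2},\qquad \sigma_{C_3}\sigma_{C_1}=\sigma_{C_4}.
\]

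Finally I would read off a non-integral eigenvalue. The four displayed identities express $\sigma_{C_0},\sigma_{C_3},\sigma_{C_2},\sigma_{C_4},\sigma_{C_1}$ as successive polynomials in $\sigma_{C_3}$, so $\tilde{ab}\cup\tilde{bc}\cup\tilde{bc^2}$ forms a distance-regular graph, and reading off its intersection numbers gives the array $\{3,2,2,1;1,1,2,3\}$ — exactly the one occurring in Lemma~\ref{lem:181} — whose spectrum $\{3,\sqrt3,0,-\sqrt3,-3\}$ is not integral. (Alternatively, without invoking distance-regularity: for each linear character $\psi$ of $C_3$, extending the $2$-dimensional irreducible representation $\theta$ of $S_3$ to $\theta\otimes\psi$ on $G$, the matrix $\sum_{t\in C_3}(\theta\otimes\psi)(t)=\theta(ab)+\bigl(\psi(c)+\psi(c)^{-1}\bigr)\theta(b)$ has trace $0$ and determinant $-3$, since $\theta$ vanishes on transpositions and $\theta(a)+\theta(a^2)=-I$; hence $\sqrt3\in\mathrm{ev}(\sigma_{C_3})$.) Either way the fusion scheme above is symmetric but not integral, so $S_3\times C_3$ is undesirable.
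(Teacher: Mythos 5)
Your proof is correct and is essentially the paper's own argument: you exhibit an explicit symmetric fusion of $(G,\tilde G)$ whose $3$-element class induces the distance-regular graph with intersection array $\{3,2,2,1;1,1,2,3\}$ (spectrum containing $\pm\sqrt3$), which is exactly what the paper does, only with a slightly different (but equivalent) choice of the valency-$3$ class ($\{ab,bc,bc^2\}$ versus the paper's $\{z,xyz,x^2yz\}$). The four structure-constant identities you list are correct and the remaining products do close up, so the hedged ``bookkeeping'' step goes through; your representation-theoretic alternative for extracting $\sqrt3$ is a nice bonus but not needed.
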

\begin{proof}
  Let $\gn{x,y,z\mid x^3=y^3=[x,y]=[z,x]=zyzy=z^2=1}$ denote the group given in the statement.
  Then the following partition of $G$ induces a symmetric fusion of $(G,\tilde{G})$:
  \[\{\{1\},\{y,y^2\}, \{x,xy,xy^2, x^2,x^2y,x^2y^2\}, \{z,xyz,x^2yz\},\]
  \[ \{yz,y^2z,xz,xy^2z,x^2z,x^2y^2z\}\}.\]
  Since $\tilde{z}\cup\tilde{xyz}\cup\tilde{x^2yz}$ forms a distance-regular graph with intersection array $\{3,2,2,1;1,1,2,3\}$
  whose eigenvalues are not all integral,
  the statement holds.
\end{proof}

\begin{lem}\label{lem:241}
  The direct product $C_2\times C_2\times S_3$ is undesirable.
\end{lem}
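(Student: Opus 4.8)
The plan is to imitate the pattern of Lemmas~\ref{lem:di}--\ref{lem:182}: write down an explicit partition of $G:=C_2\times C_2\times S_3$, check that it is a symmetric fusion scheme of $(G,\tilde G)$, and single out one class whose Cayley graph has a non-integral eigenvalue.

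Fix generators $G=\gn{a,b}\times\gn{r,s}$ with $a^2=b^2=r^3=s^2=1$ and $srs=r^{-1}$, and recall that for $C=C^{-1}\subseteq G\setminus\{1\}$ the spectrum of the Cayley graph $\mathrm{Cay}(G,C)$ is $\bigcup_\rho \mathrm{ev}\big(\rho(\sum_{c\in C}c)\big)$, the union being over the irreducible complex representations $\rho$ of $G$. Every linear character of $G$ is $\{\pm1\}$-valued, so the only possible sources of non-integral eigenvalues are the four $2$-dimensional irreducibles $\rho=\psi\otimes\sigma$, with $\psi$ a linear character of $C_2^2$ and $\sigma$ the $2$-dimensional irreducible representation of $S_3$. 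Sorting $C$ by the $S_3$-coordinate of its elements, the reflections contribute $\sum_t\alpha_t(\psi)\,\sigma(t)$ (the sum over the three reflections $t$, with $\alpha_t(\psi)=\sum_{c_1:(c_1,t)\in C}\psi(c_1)\in\mathbb Z$), while the identity and the inverse-closed pairs $\{(c_1,r),(c_1,r^2)\}$ contribute only a scalar $\beta(\psi)I$. Since $\sigma(s)+\sigma(rs)+\sigma(r^2s)=0$, the matrix $\beta I+\sum_t\alpha_t\sigma(t)$ has trace $2\beta$ and eigenvalues $\beta\pm\sqrt{Q}$ with $Q=\alpha_s^2+\alpha_{rs}^2+\alpha_{r^2s}^2-\alpha_s\alpha_{rs}-\alpha_{rs}\alpha_{r^2s}-\alpha_s\alpha_{r^2s}\in\mathbb Z_{\ge0}$. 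Thus it suffices to choose $C$ so that, for some $\psi$, the integer $Q$ is not a perfect square --- the smallest possibility is $Q=3$, e.g. $(\alpha_s,\alpha_{rs},\alpha_{r^2s})=(1,-1,0)$ --- and, for safety, so that $\gn{C}=G$, which keeps the graph connected and prevents the fusion from secretly living over the desirable subgroup $C_2\times S_3$.

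With such a class in hand I would build the rest of the partition around it, in the spirit of the class $\{z,xyz,x^2yz\}$ of Lemma~\ref{lem:182}: start from a suitable closed subset in the $C_2^2$-direction together with a few further classes, each of which is a union of $\gn{a}$- or $\gn{b}$-cosets in the $C_2^2$-coordinate tensored with an $S_3$-piece, and verify the fusion-scheme axioms by checking that the products $\underline X\cdot\underline Y$ of the class indicator sums in $\mathbb Z[G]$ are non-negative integer combinations of the class indicator sums. Because $C_2^2$ is a direct factor and the pieces have this tensor-like shape, these identities reduce to a finite check inside $\mathbb Z[C_2^2]$ and $\mathbb Z[S_3]$. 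Finally one names $\mathrm{Cay}(G,C)$ as a concrete graph (or records its intersection array), at which point the eigenvalue $\beta\pm\sqrt Q$ (for the above $\psi$) makes the non-integrality explicit, as in Lemmas~\ref{lem:al}--\ref{lem:182}.

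The main obstacle is this last bit of bookkeeping: producing a single partition that is at the same time a genuine association scheme and carries a class with $Q$ a non-square. Neither requirement is hard on its own, but the S-rings of $C_2\times C_2\times S_3$ that achieve both are not orbit schemes of $\mathrm{Aut}(G)$ --- just as in Lemma~\ref{lem:182} the set $\{z,xyz,x^2yz\}$ cannot be an automorphism orbit, since its members have different orders --- so the right partition has to be found essentially by hand and its structure constants then checked directly.
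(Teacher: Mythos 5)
There is a genuine gap: you never exhibit the object whose existence is the entire content of the lemma. Your spectral analysis of Cayley graphs over $C_2\times C_2\times S_3$ is correct --- the only non-integral eigenvalues can come from the four two-dimensional irreducibles $\psi\otimes\sigma$, the $C_3$-pairs and the identity contribute scalars, and the reflection contributions give eigenvalues $\beta\pm\sqrt{Q}$ with $Q=\sum\alpha_t^2-\sum_{t<t'}\alpha_t\alpha_{t'}$, so a class with $Q$ a non-square (e.g.\ $Q=3$) would do. But finding a symmetric inverse-closed subset with non-integral Cayley spectrum only shows $G$ is not Cayley integral, which is strictly weaker than undesirable (the paper stresses that $S_3\times C_2$ is not Cayley integral yet \emph{is} desirable). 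What must be produced is a symmetric \emph{fusion scheme} of $(G,\tilde G)$ --- equivalently a symmetric Schur ring over $G$ --- one of whose basic sets has this property, and you explicitly defer this step, calling it ``the main obstacle.'' Since no partition is written down and no structure constants are checked, the proof is not complete.

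For comparison, the paper resolves exactly this obstacle by exhibiting the partition $P\cup xP$ with
\[P=\{\{1\},\{xy\}, \{\sigma,\sigma^2\}, \{x\tau,y\sigma\tau \}, \{xy\sigma,xy\sigma^2\},
    \{x\sigma^2\tau,y\sigma^2\tau\},\{x\sigma\tau,y\tau\}\}\]
(in its notation $\gn{x}\times\gn{y}\times\gn{\sigma,\tau}$), and the offending class is $\{x\tau,y\sigma\tau\}$, whose Cayley graph is a disjoint union of two $12$-gons with eigenvalues $2\cos(2\pi k/12)$, including $\pm\sqrt3$. Note that this class fits your template precisely: for the linear character $\psi$ of $C_2^2$ with $\psi(x)=1$, $\psi(y)=-1$ one gets $(\alpha_\tau,\alpha_{\sigma\tau},\alpha_{\sigma^2\tau})=(1,-1,0)$ and $Q=3$, so your criterion correctly certifies non-integrality once the scheme is in hand. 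To complete your argument you would need to either verify the fusion axioms for a concrete partition such as this one, or give a structural reason (e.g.\ an orbit description under a suitable group of permutations of $G$) why your candidate classes close up into an association scheme; as you yourself observe, these classes are not $\mathrm{Aut}(G)$-orbits, so the verification cannot be waved away.
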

\begin{proof}
  Let $G=\gn{x}\times \gn{y}\times \gn{\sigma,\tau}$ denote the group given in the statement
  where $|x|=|y|=|\tau|=2$ and $|\sigma|=3$.
  Then the following partition of $G$ induces a symmetric fusion of $(G,\tilde{G})$:
  \[P\cup xP\mbox{ where}\]
  \[P=\{\{1\},\{xy\}, \{\sigma,\sigma^2\}, \{x\tau,y\sigma\tau \}, \{xy\sigma,xy\sigma^2\},
    \{x\sigma^2\tau,y\sigma^2\tau\},\{x\sigma\tau,y\tau\}\}.\]
  Since $\tilde{x\tau}\cup\tilde{y\sigma\tau}$ forms a disjoint union of two 12-gons
  whose eigenvalues are not all integral, the statement holds.
\end{proof}

\begin{lem}\label{lem:242}
  The semidirect product $(C_3\rtimes C_4)\times C_2$ is undesirable.
\end{lem}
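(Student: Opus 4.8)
I would prove this by exhibiting an explicit symmetric Schur ring over $G=\gn{x,y,z \mid x^3=y^4=z^2=1,\ y^{-1}xy=x^{-1},\ xz=zx,\ yz=zy}$ (of order $24$) one of whose basic relations is a non‑integral Cayley graph. The point to note first is that the simpler devices of Lemmas~\ref{lem:di}--\ref{lem:241} cannot apply: $G$ has no element of order $>6$ and no non‑central involution (its only involutions $y^2,z,y^2z$ are central with pairwise products of order $\le 2$), and every proper subgroup of $G$ is abelian of exponent dividing $4$ or $6$ or else $\cong C_3\rtimes C_4$, hence Cayley integral, so Corollary~\ref{cor:5} offers no reduction. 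Thus no disjoint union of cycles and no smaller non‑integral Cayley graph occurs among the relations, and a witness must be a \emph{connected} Cayley graph whose non‑integral eigenvalue is forced by a $2$‑dimensional irreducible representation of $G$.

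Concretely, I would take the partition of $G$ into
\[\{1\},\ \{y^2\},\ \{z\},\ \{y^2z\},\ \{x,x^2\},\ \{xz,x^2z\},\ \{xy^2,x^2y^2\},\ \{xy^2z,x^2y^2z\},\]
\[\{xy,xy^3,x^2yz,x^2y^3z\},\quad \{xyz,xy^3z,x^2y,x^2y^3\},\quad \{y,y^3,yz,y^3z\}.\]
Every block is closed under inversion, so the induced fusion scheme — once we know it is one — is symmetric. On the index‑$2$ subgroup $M=\gn{x,y^2,z}\cong C_3\times C_2\times C_2$ the first eight blocks are exactly the orbits of the automorphism of $M$ inverting $x$ and fixing $y^2,z$, so they form a Schur ring over $M$; it then remains only to check that the products of the three remaining ``outer'' blocks with one another and with the eight inner ones stay in the $\mathbb{Z}$‑span of all eleven blocks. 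Writing $\mu=1+y^2\in\mathbb{Z}G$, the outer blocks have group‑ring sums $(xy+x^2yz)\mu$, $z(xy+x^2yz)\mu$ and $(y+yz)\mu$; since $\mu$ is central and $\mu^2=2\mu$, each such product collapses to a short computation in $\mathbb{Z}M$ — for instance $(x+x^2)\,(xy+x^2yz)\mu=(xyz+xy^3z+x^2y+x^2y^3)+(y+y^3+yz+y^3z)$ and $\big((xy+x^2yz)\mu\big)^2=4+4y^2+2(xz+x^2z)+2(xy^2z+x^2y^2z)$ — and one verifies that in every case the outcome is a non‑negative integer combination of the basic sets.

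Finally, for the relation $R=\tilde{xy}\cup\tilde{xy^3}\cup\tilde{x^2yz}\cup\tilde{x^2y^3z}$, the matrix $\sigma_R$ is the adjacency matrix of the Cayley graph on $G$ with the inverse‑closed connection set $\{xy,xy^3,x^2yz,x^2y^3z\}$, so $\mathrm{ev}(\sigma_R)$ is the union over irreducible representations $\rho$ of $G$ of $\mathrm{ev}\!\big(\sum_g\rho(g)\big)$. For the $2$‑dimensional representation with $\rho(x)=\mathrm{diag}(\zeta,\zeta^2)$ ($\zeta=e^{2\pi i/3}$), $\rho(y)=\left(\begin{smallmatrix}0&1\\1&0\end{smallmatrix}\right)$ and $\rho(z)=-I$, one gets $\sum_g\rho(g)=\left(\begin{smallmatrix}0&2(\zeta-\zeta^2)\\2(\zeta^2-\zeta)&0\end{smallmatrix}\right)$, whose characteristic polynomial is $\lambda^2-12$ because $(\zeta-\zeta^2)^2=-3$. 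Hence $2\sqrt3\in\mathrm{ev}(\sigma_R)\setminus\mathbb{Z}$, the fusion scheme above is not integral, and $G$ is undesirable. (Equivalently, that Cayley graph is bipartite with parts $M$ and $My$, and its biadjacency matrix is the Cayley‑digraph matrix of $M$ with connection set $\{x,xy^2,x^2z,x^2y^2z\}$, a character sum over $C_3\times C_2\times C_2$ giving a value of absolute value $2\sqrt3$.)

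The only real obstacle is the middle step: one must actually produce a symmetric Schur ring over $G$ carrying a non‑integral Cayley graph, and because all proper subgroups and all proper quotients of $G$ are already desirable, there is no way to inherit such a scheme from a smaller group — the partition has to be found directly (by hand, or by a search through the Schur rings over $G$) and then certified by the computation above. What makes it possible at all is that the outer blocks deliberately mix the two cyclic subgroups $\gn{xy}$ and $\gn{x^2yz}$ together with the central factor $\gn z$, which is exactly the feature that lets a $2$‑dimensional representation detect $\zeta$, and hence $\sqrt3$.
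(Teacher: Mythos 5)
Your proposal is correct and is essentially identical to the paper's proof: after swapping the roles of the generators (the paper uses $|x|=2$, $|z|=3$), your eleven-block partition is exactly the paper's fusion $\{\{a\}_{a\in\gn{x,y^2}},\{az,az^2\}_{a\in\gn{x,y^2}},\gn{x,y^2}y,T,xT\}$, and your witness relation is the paper's $T$, whose adjacency matrix the paper likewise certifies as non-integral via the factor $\lambda^2-12$ (eigenvalues $\pm2\sqrt3$). Your extra details -- the group-ring verification with $\mu=1+y^2$ and the $2$-dimensional representation computation -- just make explicit the checks the paper leaves to the reader.
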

\begin{proof}
  Let $G=H\cup Hy$ where $H=\gn{x}\times \gn{y^2}\times \gn{z}$ is the unique subgroup with index two
  with $|x|=2$, $|y|=4$ and $|z|=3$.
  Then the following partition of $G$ induces a symmetric fusion of $(G,\tilde{G})$:
  \[\left\{\{a\}_{a\in \gn{x,y^2}}, \{az,az^2\}_{a\in \gn{x,y^2}}, \gn{x,y^2}y,T, xT\right\}\]
  where $T:=\{zy,zy^3,xz^2y^3,z^2xy\}$.
  Since the minimal polynomial of the adjacency matrix of the graph induced by $\bigcup_{t\in T}\tilde{t}$ is
  $\lambda(\lambda^2-4)(\lambda^2-16)(\lambda^2-12)$ where $\lambda$ is an indeterminate,
  the statement holds.
\end{proof}

\begin{lem}\label{lem:27}
  There are no non-abelian desirable groups of order 27.
\end{lem}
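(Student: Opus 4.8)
The plan is to reduce the statement to one specific group and then imitate the style of Lemmas~\ref{lem:di}--\ref{lem:242}. By Corollary~\ref{cor:8} every element of a desirable group has order in $\{1,2,3,4,6\}$, so a desirable group of order $27$ would have exponent $3$. Up to isomorphism there are exactly two non-abelian groups of order $27$: the modular group $\gn{a,b\mid a^9=b^3=1,\ b^{-1}ab=a^4}$, which has an element of order $9$ and is therefore undesirable by Corollary~\ref{cor:8}; and the Heisenberg group
\[H=\gn{x,y,z\mid x^3=y^3=z^3=[x,z]=[y,z]=1,\ [x,y]=z}\]
of exponent $3$. Hence it suffices to show that $H$ is undesirable.

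To do this I would produce a symmetric fusion scheme of $(H,\tilde H)$ one of whose relations is the Cayley graph $\mathrm{Cay}(H,\{x,x^{-1},y,y^{-1}\})$. Consider the map $\phi$ on $H$ fixing $z$ and sending $x\mapsto y$ and $y\mapsto x^{-1}$; since $[y,x^{-1}]=z$ in $H$, a short check with the commutator identities shows that $\phi$ is a well-defined automorphism of $H$ with $\phi^{4}=\mathrm{id}$, and that the $\gn{\phi}$-orbit of $x$ is exactly $\{x,x^{-1},y,y^{-1}\}$. The partition of $H$ into $\gn{\phi}$-orbits is a Schur ring over $H$, equivalently a fusion of $(H,\tilde H)$: it is the partition by orbitals of the transitive group generated by the regular representation $\tilde H$ and $\phi$, whose point stabilizer is $\gn{\phi}$. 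Fusing each orbit $O$ with $O^{-1}$ yields a coarser, symmetric fusion scheme in which, because $\{x,x^{-1},y,y^{-1}\}$ is already closed under inversion, this set survives as a single relation. In the final write-up I would simply display this symmetrized partition explicitly, as in the earlier lemmas.

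Finally I would show that $\mathrm{Cay}(H,\{x,x^{-1},y,y^{-1}\})$ has a non-integral eigenvalue via the representation theory of $H$. The nine linear characters of $H$ contribute the integers $2\cos(2\pi a/3)+2\cos(2\pi b/3)\in\{-2,1,4\}$, while each of the two faithful $3$-dimensional irreducible representations $\rho$ (one may take $\rho(x)=\mathrm{diag}(1,\omega,\omega^{2})$ with $\omega=e^{2\pi i/3}$ and $\rho(y)$ the permutation matrix of a $3$-cycle) sends $\{x,x^{-1},y,y^{-1}\}$ to the matrix $\mathrm{diag}(2,-1,-1)$ plus the adjacency matrix of a triangle, whose characteristic polynomial equals $-(\lambda+2)(\lambda^{2}-2\lambda-2)$. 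Thus $1\pm\sqrt{3}$ are eigenvalues of the adjacency matrix of that relation, so the fusion scheme is not integral and $H$ is undesirable.

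The only genuine work — and the step I expect to be the main obstacle — is the construction of $\phi$ together with the verification of the two assertions about it (that $\phi\in\mathrm{Aut}(H)$, and that the $\gn{\phi}$-orbit of $x$ is precisely $\{x,x^{-1},y,y^{-1}\}$ rather than a larger set meeting other cosets of $\gn{z}$). Once those are established, the spectral computation and the reduction to $H$ are routine, and the argument closes in the same format as Lemmas~\ref{lem:di}--\ref{lem:242}.
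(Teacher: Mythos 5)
Your reduction to the Heisenberg group $H$ of exponent $3$ is exactly the paper's first step, and your spectral computation is correct: with $\rho(x)=\mathrm{diag}(1,\omega,\omega^2)$ and $\rho(y)$ a $3$-cycle permutation matrix, $\sum_{t\in\{x^{\pm1},y^{\pm1}\}}\rho(t)$ has characteristic polynomial $-(\lambda+2)(\lambda^2-2\lambda-2)$, so $1\pm\sqrt3$ really are eigenvalues of $\mathrm{Cay}(H,\{x^{\pm1},y^{\pm1}\})$. The map $\phi$ is a genuine automorphism of order $4$ (one checks $[y,x^{-1}]=[y,x]^{-1}=z$) with orbit $\{x,x^{-1},y,y^{-1}\}$, so the step you single out as the main obstacle is fine, and the $\gn{\phi}$-orbits do form a Schur ring by Schur's orbit theorem. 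This is a genuinely different, more conceptual route than the paper's, which simply exhibits an explicit six-class partition (classes of sizes $1,2,6,6,6,6$) and quotes the minimal polynomial $\lambda(\lambda+3)(\lambda-6)(\lambda^3-9\lambda-9)$ of one class without explaining where either comes from.

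The genuine gap is the sentence ``fusing each orbit $O$ with $O^{-1}$ yields a coarser, symmetric fusion scheme,'' which you treat as automatic. It is not: over a non-abelian group the symmetrization of a Schur ring need not be a Schur ring (already the symmetrization of the finest Schur ring $\mathbb{Z}[G]$ fails, since $\underline{\{g,g^{-1}\}}\,\underline{\{h,h^{-1}\}}$ contains $gh$ but generally not $h^{-1}g^{-1}$), and the paper's notion of desirability only quantifies over \emph{symmetric} fusion schemes, so you cannot instead fall back on the non-symmetric orbit scheme, whose class $\{z\}$ is not inverse-closed. The claim is rescued here because the orbit scheme of $\gn{\phi}$ on $H$ happens to be commutative (its nine classes correspond to a multiplicity-free permutation representation of $H\rtimes\gn{\phi}$ of degree $27$), and the symmetrization of a commutative scheme is a scheme; alternatively one can verify directly that the six symmetrized classes $\{1\}$, $\{z,z^2\}$, $\{x^{\pm1},y^{\pm1}\}$, $\{x^{\pm1}z^{\pm1},y^{\pm1}z^{\pm1}\}$ and the two classes partitioning the remaining twelve elements span a subalgebra of $\mathbb{Z}[H]$. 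Either argument must be supplied; as written, the proof asserts the existence of the required symmetric fusion scheme without justification, and this—not the construction of $\phi$—is the real obstacle in your approach.
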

\begin{proof}
  Notice that there are two non-abelian groups of order 27 those exponents are 9 and 3. Every group with exponent 9 is undesirable by Corollary~\ref{cor:8}.
  Let $G=(\gn{x}\times \gn{y})\rtimes \gn{z}$ where $|x|=|y|=|z|=3$, $zx=xz$ and
  $z^{-1}yz=xy$.
  Then $G$ is a unique non-abelian group of order 27 with exponent 3, and
  the following partition of $G$ induces a symmetric fusion of $(G,\tilde{G})$:
  \[\left\{\{1\},\{x,x^2\}, \{y,y^2,xy,xy^2, x^2y,x^2y^2\},H_1,H_2,H_3\right\}\]
  where $H_1=\{z,z^2,yz,x^2y^2z^2,x^2y^2z,x^2yz^2\}$,
  $H_2=\{xz,x^2z^2,xyz,xy^2z^2,y^2z,xyz^2\}$ and
  $H_3=\{x^2z,xz^2,x^2yz,y^2z^2z, xy^2z,yz\}$.
  Since the minimal polynomial of the adjacency matrix of the graph induced by $\bigcup_{h\in H_1}\tilde{h}$ is
  $\lambda(\lambda+3)(\lambda-6)(\lambda^3-9 \lambda-9)$ where $\lambda$ is an indeterminate,
  the statement holds.
\end{proof}

\begin{prop}\label{prop:10}
  There are no non-abelian desirable groups of order $18$, $24$ or $27$.
\end{prop}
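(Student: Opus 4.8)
The plan is a finite case check against the classification of groups of orders $18$, $24$, and $27$. For order $27$ nothing new is needed: Lemma~\ref{lem:27} already covers both non-abelian groups (the one of exponent $9$ via Corollary~\ref{cor:8}, the one of exponent $3$ by the explicit fusion exhibited there). For the remaining two orders I would walk down the list of non-abelian groups and, for each one, either point to a section (subgroup or quotient) already shown undesirable in Section~3 and invoke Corollary~\ref{cor:5}, or observe that the group has an element of order $8$ or $12$ and invoke Corollary~\ref{cor:8}.

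For order $18$: the Sylow $3$-subgroup is normal of index $2$, and inspecting the possible actions of $C_2$ on $C_9$ and on $C_3\times C_3$ shows that the non-abelian groups are exactly $D_{18}$, $S_3\times C_3$, and $(C_3\times C_3)\rtimes C_2$ with $C_2$ acting by inversion. The first has an element of order $9$, so Corollary~\ref{cor:8} applies; the other two are precisely the groups treated in Lemmas~\ref{lem:182} and~\ref{lem:181}.

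For order $24$: there are twelve non-abelian groups, and I would sort them as follows. Five of them — $C_3\rtimes C_8$, the dicyclic group of order $24$, $C_4\times S_3$, $D_{24}$, and $C_3\times Q_8$ — contain an element of order $8$ or $12$, hence are undesirable by Corollary~\ref{cor:8}. Two more — $C_3\times D_8$ and the non-dihedral semidirect product $C_3\rtimes D_8$ — contain a copy of $D_8$, hence are undesirable by Lemma~\ref{lem:di} with Corollary~\ref{cor:5}. Three more — $SL(2,3)$ (whose quotient by its centre of order $2$ is $A_4$), $S_4$, and $C_2\times A_4$ — have $A_4$ as a quotient or as a subgroup, hence are undesirable by Lemma~\ref{lem:al} with Corollary~\ref{cor:5}. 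The remaining two, $C_2\times C_2\times S_3$ and $(C_3\rtimes C_4)\times C_2$, are exactly the groups of Lemmas~\ref{lem:241} and~\ref{lem:242}. Since $5+2+3+2=12$, this exhausts the list.

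The main obstacle is not any single deep step but the bookkeeping: one must be certain that the enumeration of the twelve non-abelian groups of order $24$ is complete and that each is matched to the correct earlier result, which is cleanest if one organises the list by Sylow $2$-subgroup (possible types $C_8$, $C_4\times C_2$, $C_2^3$, $D_8$, $Q_8$) and by whether the Sylow $3$-subgroup is normal. Within that, the one genuinely non-routine point is $SL(2,3)$: it has no element of order exceeding $6$, it does not contain $D_8$, and its evident $2$-subgroup $Q_8$ is in fact \emph{desirable}, so the only available argument is to pass to the quotient $SL(2,3)/Z(SL(2,3))\cong A_4$ and apply Corollary~\ref{cor:5}. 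A minor related subtlety worth flagging in the write-up is that one of the two semidirect products $C_3\rtimes D_8$ is isomorphic to $D_{24}$, so it must not be counted twice.
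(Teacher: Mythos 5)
Your proposal is correct, and it follows the same overall strategy as the paper: a finite case check against the classification of the non-abelian groups of orders $18$, $24$ and $27$, dispatching each group either by Corollary~\ref{cor:8} (an element of order $8$, $9$ or $12$), by Corollary~\ref{cor:5} applied to an undesirable section ($D_8$ from Lemma~\ref{lem:di}, $A_4$ from Lemma~\ref{lem:al}), or by the explicit fusions of Lemmas~\ref{lem:181}, \ref{lem:182}, \ref{lem:241}, \ref{lem:242} and \ref{lem:27}. The orders $18$ and $27$ are handled identically in both arguments. For order $24$ your bookkeeping is in fact tighter than the paper's: the paper restricts at once to the six groups with no element of order $8$ or $12$ and lists them as $SL(2,3)$, $(C_3\rtimes C_4)\times C_2$, $D_{12}\times C_2$, $S_4$, $C_2\times A_4$, $C_2\times C_2\times S_3$, assigning the third entry to the $A_4$ argument; but if $D_{12}$ denotes the dihedral group of order $12$ then $D_{12}\times C_2\cong C_2\times C_2\times S_3$ is a duplicate entry (and contains no $A_4$ section), and the group that should occupy that slot is the non-dihedral $C_3\rtimes D_8$ (SmallGroup$(24,8)$), whose exponent is $6$. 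Your enumeration of all twelve non-abelian groups of order $24$ catches exactly this group and correctly kills it via its $D_8$ Sylow $2$-subgroup together with Lemma~\ref{lem:di} and Corollary~\ref{cor:5}; your treatment of $SL(2,3)$ through its central quotient $A_4$, and your warning that the cyclic-kernel semidirect product $C_3\rtimes D_8$ is just $D_{24}$, are both accurate. So your write-up not only reproduces the paper's argument but repairs a defect in its list for order $24$.
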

\begin{proof}
  Since the groups as in Lemma~\ref{lem:181} and~\ref{lem:182} are the non-abelian groups of order $18$
  without any element of order $9$, there are no such groups of order $18$.

  The following are the non-abelian groups of order 24 without any element of order 8 or 12:
  \[\mbox{$SL(2,3)$, $ (C_3\rtimes C_4)\times C_2$, $D_{12}\times C_2$, $S_4$, $C_2\times A_4$ and $C_2\times C_2\times S_3$.}\]
  Among them the first, third, fourth, fifth ones are undesirable by Lemma~\ref{lem:al} and Corollary~\ref{cor:5} and the second, last ones are undesirable by Lemma~\ref{lem:241} and \ref{lem:242}. So there are no such groups of order 24.

  By Lemma~\ref{lem:27}, there are no such groups of order 27, the statement holds.
\end{proof}
\section{Proof of our main result}

\begin{lem}\label{lem:13}
  Let $G$ be a desirable group and $a,b\in G$ non-commuting involutions.
  Then $|\gn{a,b}|\in \{6,12\}$.
\end{lem}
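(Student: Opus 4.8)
The plan is to pin down the isomorphism type of $\gn{a,b}$ and then discard the single value of its order that the earlier results forbid. First I would record the elementary fact that a group generated by two involutions is dihedral. Put $r:=ab$. Since $a^2=b^2=1$ we have $ara=a(ab)a=ba=(ab)^{-1}=r^{-1}$, so $a$ inverts $r$; moreover $b=ar$, whence $\gn{a,b}=\gn{r,a}$ is dihedral of order $2n$, where $n:=|ab|=|r|$. Because $a$ and $b$ do not commute, $ab\ne ba$, i.e. $r\ne r^{-1}$, so $n\ge 3$.

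Next I would invoke Corollary~\ref{cor:8}: since $G$ is desirable, its element $ab$ has order in $\{1,2,3,4,6\}$, and combining this with $n\ge 3$ forces $n\in\{3,4,6\}$. Hence $|\gn{a,b}|=2n\in\{6,8,12\}$, and it remains only to exclude the case $2n=8$.

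For that I would use Corollary~\ref{cor:5}, which says a subgroup of a desirable group is desirable, so $\gn{a,b}$ is desirable. But if $n=4$ then $\gn{a,b}$ is precisely the dihedral group of order $8$, which is undesirable by Lemma~\ref{lem:di}; this contradiction rules out $2n=8$ and leaves $|\gn{a,b}|\in\{6,12\}$, as required. I do not expect a real obstacle: the only step needing a moment's care is the routine verification that two non-commuting involutions generate a dihedral group of order twice the order of their product (and that this product has order at least $3$); everything else is an immediate application of Corollaries~\ref{cor:5} and~\ref{cor:8} together with Lemma~\ref{lem:di}.
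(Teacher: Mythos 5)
Your proposal is correct and follows exactly the paper's own argument: identify $\gn{a,b}$ as a dihedral group of order $2|ab|$, restrict $|ab|$ to $\{3,4,6\}$ via Corollary~\ref{cor:8}, and eliminate order $8$ using Lemma~\ref{lem:di} together with Corollary~\ref{cor:5}. The paper merely compresses these same steps into one sentence.
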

\begin{proof}
  Since $\gn{a,b}$ is isomorphic to a dihedral group,
  it follows from Corollary~\ref{cor:8} and Lemma~\ref{lem:di}.
\end{proof}

\begin{lem}\label{lem:30}
  Let $G$ be a desirable group.
  If $a\in G$ normalizes an elementary abelian $2$-subgroup $H$ of $G$,
  then $ab=ba$ for each $b\in H$.
\end{lem}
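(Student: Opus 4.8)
The plan is to view $a$ as acting on $H\cong C_2^n$ by conjugation and to prove this action is trivial. By Corollary~\ref{cor:8} the order of $a$ lies in $\{1,2,3,4,6\}$; the case $|a|=1$ is vacuous, and if $|a|=6$ then $a=a^3a^4$ is a product of commuting powers of $a$ of orders $2$ and $3$, each of which again normalizes $H$, so it suffices to treat $|a|\in\{2,3,4\}$. In each of these cases I argue by contradiction, assuming that $a$ does not centralize $H$, and exhibit inside $G$ a subgroup which is isomorphic to $D_8$ or to $A_4$, or which admits $D_8$ as a homomorphic image; since a subgroup or homomorphic image of a desirable group is desirable by Corollary~\ref{cor:5}, this contradicts Lemma~\ref{lem:di} or Lemma~\ref{lem:al}.

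For $|a|=2$ I would pick $b\in H$ with $b':=aba^{-1}\neq b$. As $a^2=1$, conjugation by $a$ interchanges $b$ and $b'$ and fixes $d:=bb'\in H\setminus\{1\}$, so $\gn{b,b'}\cong C_2^2$ is $\gn{a}$-invariant and, since $a$ does not centralize the abelian group $H$, does not contain $a$. Then $\gn{b,b',a}$ is a non-abelian group of order $8$ containing the order-$4$ element $ba$ (because $(ba)^2=d\neq1$) and the two distinct involutions $b$ and $d$, hence $\gn{b,b',a}\cong D_8$. For $|a|=3$ with nontrivial induced action, I would decompose $H$ as a module over $\mathbb F_2[\gn{a}]=\mathbb F_2\times\mathbb F_4$: some summand is a $2$-dimensional $\mathbb F_4$-line $V$ on which $a$ acts faithfully, and then $a\notin V$ and $\gn{V,a}\cong C_2^2\rtimes C_3\cong A_4$.

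For $|a|=4$ the first step is to note that $a^2$ is an involution normalizing $H$, so by the $|a|=2$ case $a^2$ centralizes $H$; consequently the automorphism of $H$ induced by $a$ has order at most $2$. Assuming it is nontrivial, I choose $b\in H$ with $b':=aba^{-1}\neq b$; since $a^2$ centralizes $H$, conjugation by $a$ again interchanges $b$ and $b'$ and fixes $d:=bb'\neq1$, and $V:=\gn{b,b'}\cong C_2^2$ is $\gn{a}$-invariant. Now I split on whether $a^2\in V$. If $a^2=d$, then $b'=ba^2\in\gn{b,a}$, so $\gn{b,a}$ has order $8$ and, as in the $|a|=2$ case, is isomorphic to $D_8$. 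Otherwise $a^2\notin V$ (the only remaining possibility, since $a^2=b$ or $a^2=b'$ would force $b'=b$), so $W:=\gn{V,a^2}\cong C_2^3$; the subgroup $\gn{a^2}$ is central in $\gn{W,a}$, the quotient $\gn{W,a}/\gn{a^2}$ has order $8$, the image of $a$ in it has order $2$, and that image still acts nontrivially on $\overline W\cong C_2^2$ because $d\notin\gn{a^2}$. Hence $\gn{W,a}/\gn{a^2}\cong D_8$, completing the contradiction and forcing $a$ to centralize $H$.

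The main obstacle is exactly this last configuration: for $|a|=4$ the group $\gn{H,a}$ need not itself contain a copy of $D_8$, so one is forced to factor out the central involution $\gn{a^2}$ to recover it, and one has to verify both that $a^2$ avoids the three involutions of $\gn{b,aba^{-1}}$ and that the induced action on the quotient remains nontrivial. The remaining cases reduce to short, routine verifications with $2$-groups of order at most $8$ and with $C_2^2\rtimes C_3$.
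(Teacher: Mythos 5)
Your proof is correct, and its overall architecture coincides with the paper's: reduce $|a|=6$ to the orders $2$ and $3$ via $a=a^3a^4$, and in the remaining cases $|a|\in\{2,3,4\}$ derive a contradiction by locating a forbidden subgroup or quotient ($D_8$ or $A_4$), using that subgroups and quotients of desirable groups are desirable. The cases $|a|=2$ and $|a|=4$ are essentially the paper's argument (the paper also first uses the $|a|=2$ case to make $a^2$ central, then splits on whether $a^2$ lies in $\gn{b,a^{-1}ba}$; your direct verification that the quotient $\gn{W,a}/\gn{a^2}$ is non-abelian because $d=b\cdot aba^{-1}\notin\gn{a^2}$ is a slightly cleaner way to close the second subcase than the paper's "if the quotient is abelian then $bab=a^{-1}$" computation). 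The genuine difference is in the case $|a|=3$: the paper splits on whether $\gn{b,a^{-1}ba,aba^{-1}}$ has rank $2$ or $3$, obtaining $A_4$ in the first subcase but, in the second, a non-abelian group of order $24$ whose elimination requires Proposition~\ref{prop:10} (hence the whole battery of Lemmas~\ref{lem:al}--\ref{lem:242}). Your semisimple decomposition of $H$ over $\mathbb{F}_2[\gn{a}]\cong\mathbb{F}_2\times\mathbb{F}_4$ extracts a faithful $2$-dimensional summand $V$ and produces $\gn{V,a}\cong A_4$ directly in all subcases, so you only need Lemma~\ref{lem:al}. This buys a shorter dependency chain for this lemma (no appeal to the order-$24$ classification), at the cost of invoking Maschke's theorem rather than staying entirely within elementary subgroup manipulations; both arguments are complete.
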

\begin{proof}
  Suppose the contrary, i.e., $ab\ne ba$ for $a\in N_G(H)$ and $b\in H$
  where $N_G(H)=\{x\in G\mid x^{-1}Hx=H\}$.
  Since $\gn{a^{-i}b a^i\mid i=0,1,\ldots,3}$ is a subgroup of $H$,
  it is an elementary abelian $2$-group of rank at least two.
  By Corollary~\ref{cor:8}, we divide our proof according to $|a|\in \{2,3,4,6\}$.

  If $|a|=2$, then $\gn{a,b}$ is a non-abelian group of order $8$, which is isomorphic to $D_8$, a contradiction to Lemma~\ref{lem:di}.

  If $|a|=4$, then $\gn{a^{-i}b a^i\mid i=0,1,\ldots,|a|-1}=\gn{b,a^{-1}ba}$ since $a^2$ commutes with $b$ by what we proved in the last paragraph.
  If $a^2\in \gn{b,a^{-1}ba}$, then $\gn{a,b}$ is isomorphic to $D_8$ since it has more than one involutions, a contradiction to Lemma~\ref{lem:di}.
  If $a^2\notin \gn{b,a^{-1}ba}$, then $\gn{a^2}$ is central in $\gn{a,b}$ and $|\gn{a,b}/\gn{a^2}|=8$ by $|a^2|=2$.
  If $\gn{a,b}/\gn{a^2}$ is non-abelian, then it is isomorphic to $D_8$, a contradiction to Lemma~\ref{lem:di}.
  If $\gn{a,b}/\gn{a^2}$ is abelian, then $a^{-1}ba=ba^2$, which implies that $bab=a^{-1}$, and hence, $\gn{a,b}$ is isomorphic to $D_8$,
  a contradiction to Lemma~\ref{lem:di}.

  If $|a|=3$ and $\gn{b,a^{-1}ba, aba^{-1}}\simeq C_2^2$,
  then
  $\gn{a,b}=\gn{a, b, a^{-1}b a, aba^{-1}}$ is isomorphic to $A_4$, which contradicts Lemma~\ref{lem:al}.

  If $|a|=3$ and $\gn{b,a^{-1}ba, aba^{-1}}\simeq C_2^3$,
  then $\gn{a,b}$ is a non-abelian group of order $24$, which contradicts Proposition~\ref{prop:10}.

  If $|a|=6$,
  then $a=cd=dc$ for some $d,c\in \gn{a}$ with $|d|=2$ and $|c|=3$.
  Since both $d$ and $c$ centralize $H$, $a$ also centralizes $H$.
\end{proof}

\begin{lem}\label{lem:15}
  If $G$ is a desirable non-abelian $2$-group,
  then $G$ is isomorphic to $Q_8\times C_2^m$ for some nonnegative integer $m$.
\end{lem}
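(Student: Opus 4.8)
The plan is to extract from desirability exactly four structural facts about $G$, and then run a short group‑theoretic argument. By Corollary~\ref{cor:8} every element of $G$ has order $1$, $2$ or $4$, so $G$ has exponent dividing $4$ and in particular $g^{2}\in\Omega_{1}(G)$ for all $g$, where $\Omega_{1}(G)$ is the set of elements of order at most $2$. By Lemma~\ref{lem:13} no two involutions of a desirable $2$‑group can fail to commute (a dihedral group of order $6$ or $12$ has no place in a $2$‑group), so $\Omega_{1}(G)$ is an elementary abelian subgroup; it is characteristic, hence normal, so Lemma~\ref{lem:30} gives $\Omega_{1}(G)\le Z(G)$. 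Then $G/\Omega_{1}(G)$ has exponent $\le 2$, hence is elementary abelian, so $G'\le\Omega_{1}(G)\le Z(G)$ and $G$ has nilpotency class $2$. Finally, by Corollary~\ref{cor:5} and Lemma~\ref{lem:di}, $G$ has no section isomorphic to $D_{8}$. From now on $G$ is non‑abelian, and only these facts are used.

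The crux is the claim: \emph{if $x,y\in G$ do not commute, then $\gn{x,y}\cong Q_{8}$.} To see this, put $H=\gn{x,y}$; since all involutions are central, $x$ and $y$ have order $4$ and $x^{2},y^{2},[x,y]\in Z(G)$. As $H$ is $2$‑generated of class $2$ with $H'=\gn{[x,y]}$ of order $2$, one gets $H/Z(H)\cong C_{2}\times C_{2}$ and $Z(H)=\gn{x^{2},y^{2},[x,y]}$, so $|H|\in\{8,16,32\}$ and $H$ is a $2$‑generated non‑abelian group of exponent $\le 4$ in which every involution is central. Enumerating such groups, the only possibilities are $Q_{8}$ (order $8$) and $C_{4}\rtimes C_{4}$ (order $16$), with none of order $32$ — indeed if $|H|=32$ then $x^{2},y^{2},[x,y]$ are independent, $H/\gn{x^{2}}$ is a non‑abelian group of order $16$ with a non‑central involution, and every such group has $D_{8}$ as a section. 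Since $C_{4}\rtimes C_{4}$ also has $D_{8}$ as a quotient, the $D_{8}$‑free hypothesis forces $H\cong Q_{8}$. I expect this enumeration to be the main obstacle; everything after it is formal.

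Granting the claim, one first shows $Z(G)$ is elementary abelian: if $w\in Z(G)$ had order $4$, choose non‑commuting $x,y$, so $\gn{x,y}\cong Q_{8}$ with $x^{2}=y^{2}=[x,y]=:z'$; then $xw^{-1}$ has order $4$ and does not commute with $y$, so $\gn{xw^{-1},y}\cong Q_{8}$ gives $(xw^{-1})^{2}=y^{2}=z'$, whereas $(xw^{-1})^{2}=x^{2}w^{-2}=z'w^{-2}$, forcing $w^{2}=1$, a contradiction. Next one shows that all elements of order $4$ have a common square: for $x,x'$ of order $4$ (necessarily outside $Z(G)$) and any $g$ not commuting with $x$, the groups $\gn{x,g}$, $\gn{x',g}$, $\gn{xx',g}$ are each abelian or $\cong Q_{8}$ by the claim, and a brief case check yields $x^{2}=(x')^{2}$. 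Writing $z$ for this common square, we have $1\ne z\in Z(G)$ and, since every non‑trivial commutator is the square of one of its entries, $G'=\gn{z}$ has order $2$.

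To finish, $V:=G/Z(G)$ is an $\mathbb{F}_{2}$‑vector space and $(\bar x,\bar y)\mapsto[x,y]$ is a non‑degenerate alternating bilinear form $V\times V\to\gn{z}\cong\mathbb{F}_{2}$ (its radical is $Z(G)/Z(G)=0$), so $\dim V=2n$ with $n\ge 1$. Lift a symplectic basis $\bar e_{1},\bar f_{1},\dots,\bar e_{n},\bar f_{n}$ to elements $e_{i},f_{i}$ of $G$; each has order $4$, hence square $z$, and $[e_{i},f_{i}]=z$ while the other pairs commute, so $\gn{e_{i},f_{i}}\cong Q_{8}$, these subgroups commute elementwise for distinct $i$, and they share $\gn{z}$. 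If $n\ge 2$, then $u:=e_{1}e_{2}$ satisfies $u^{2}=z\cdot z=1$ and $[u,f_{1}]=[e_{1},f_{1}]=z\ne 1$, so $\gn{u,f_{1}}$ is non‑abelian but contains a non‑central involution — impossible by the claim. Hence $n=1$, $G=E\cdot Z(G)$ with $E:=\gn{e_{1},f_{1}}\cong Q_{8}$; writing the elementary abelian group $Z(G)$ as $\gn{z}\times B$ with $z\in E$, one has $E\cap B=1$, $[E,B]=1$ and $G=EB$, so $G=E\times B\cong Q_{8}\times C_{2}^{m}$ with $m=\dim_{\mathbb{F}_{2}}B$, as required.
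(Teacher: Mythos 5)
Your argument is sound in outline but takes a genuinely different route from the paper's. The paper's proof has only two steps: first, exactly as in your opening paragraph, the involutions generate a central elementary abelian subgroup $K$ (Lemma~\ref{lem:13} plus Lemma~\ref{lem:30}); second, for $a,b$ of order $4$ it passes to the desirable quotient $\gn{a,b}/\gn{b^2}$, in which $b\gn{b^2}$ is an involution and hence central by the same first step, so $a^{-1}ba\in b\gn{b^2}\subseteq\gn{b}$. Thus every cyclic subgroup of $G$ is normal, and the Baer--Dedekind classification of Hamiltonian groups finishes the proof (this is the argument of \cite[Thm.~2.13]{ado}, which the paper cites). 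You instead re-derive the structure of Hamiltonian $2$-groups from scratch: non-commuting pairs generate $Q_8$, all order-$4$ elements share a square $z$, $G'=\gn{z}$, and the commutator form on $G/Z(G)$ is a non-degenerate symplectic form over $\mathbb{F}_2$ whose dimension is forced down to $2$. Your version is considerably longer and its middle portion is more delicate, but it is self-contained (no appeal to Baer--Dedekind) and exhibits the $Q_8$ factor explicitly; the paper's version is shorter at the price of quoting a classification theorem.

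Three places need repair, all local. (a) In showing $Z(G)$ is elementary abelian you assert $xw^{-1}$ has order $4$; if $w^2=x^2$ it is instead an involution, but then it is a non-central involution (it fails to commute with $y$), contradicting your first paragraph, so the conclusion survives --- add that branch. (b) In the order-$32$ case, the blanket claim that every non-abelian group of order $16$ with a non-central involution has $D_8$ as a section is false (the modular group of order $16$ is a counterexample); either quotient $H$ by $\gn{x^2,y^2}$ directly, obtaining a group of order $8$ generated by two non-commuting involutions, hence $D_8$, or simply observe that $H/\gn{x^2}$ is a desirable $2$-group with a non-central involution, which your first paragraph already forbids. (c) The order-$16$ enumeration should be written out: the single relation among $x^2,y^2,[x,y]$ in $Z(H)$ is one of $[x,y]=x^2$, $[x,y]=y^2$, $x^2=y^2$, or $[x,y]=x^2y^2$; the first three each yield $H\cong C_4\rtimes C_4$, which has a $D_8$ quotient, and the last makes $xy$ a non-central involution. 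With these details supplied the proof is complete.
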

\begin{proof}
  By Lemma~\ref{lem:13}, all involutions of a desirable $2$-group commute for each other.
  This implies that the subgroup, say $K$, generated by all involutions is a normal subgroup,
  which is isomorphic to an elementary abelian $2$-group contained in the center of $G$ by
  Lemma~\ref{lem:30}.

  In order to prove the statement it suffices to show that each cyclic subgroup of $G$ is normal
  by a well-known theorem by Baer and Dedekind (We mimic the same argument as in the proof of \cite[Thm.~2.13]{ado}).
  Suppose the contrary, i.e., $a^{-1}ba \notin \gn{b}$ for $a,b\in G\setminus K$, namely, $|a|=|b|=4$.
  Let $L$ denote the subgroup of $\gn{a,b}$ generated by the involutions of $\gn{a,b}$.
  Since $L$ is central in $G$ and $b^2\in L$,
  $b\gn{b^2}\in \gn{a,b}/\gn{b^2}$ is an element of order two.
  Since $\gn{a,b}/\gn{b^2}$ is a desirable $2$-group by Corollary~\ref{cor:5},
  it follows from the same argument as in the last paragraph that $b\gn{b^2}$ is contained in the center of $\gn{a,b}/\gn{b^2}$.
  Therefore, $a^{-1} ba \in b\gn{b^2}\subseteq \gn{b}$, a contradiction.
\end{proof}

\begin{lem}\label{lem:155}
  If $G$ is a desirable $3$-group,
  then $G$ is isomorphic to $C_3^m$ for some nonnegative integer $m$.
\end{lem}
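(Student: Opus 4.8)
The plan is to show that every cyclic subgroup of a desirable $3$-group $G$ is normal, and then invoke the Baer--Dedekind theorem (exactly as in the proof of Lemma~\ref{lem:15}) to conclude that $G$ is Hamiltonian-like; since a Dedekind $p$-group with $p$ odd must be abelian, and by Corollary~\ref{cor:8} every element has order $1$ or $3$, this forces $G\simeq C_3^m$. So the real content is to prove the normality of cyclic subgroups, and here Corollary~\ref{cor:8} is the key leverage: every element of $G$ has order $1$ or $3$, so every nontrivial cyclic subgroup has order $3$, and we must show $a^{-1}\langle b\rangle a=\langle b\rangle$ for all $a,b\in G$.

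First I would suppose the contrary: there exist $a,b\in G$ with $a^{-1}ba\notin\langle b\rangle$. Then $H:=\langle b, a^{-1}ba\rangle$ is a $3$-group of exponent $3$ that is not cyclic, so it contains $C_3^2$; by taking a minimal such configuration I would reduce to the case $|\langle a,b\rangle|=27$ (or small). The point is that $\langle a,b\rangle$ is a non-abelian group of exponent $3$, hence of order at least $27$, and by Corollary~\ref{cor:5} it is itself desirable. If $|\langle a,b\rangle|=27$ we get an immediate contradiction with Lemma~\ref{lem:27}. The remaining task is to rule out that the relevant non-abelian section has order strictly larger than $27$ --- but any non-abelian group generated by two elements of order $3$ in which all the relevant commutators have order $3$ has a quotient isomorphic to the extraspecial group of order $27$ and exponent $3$ (quotient out a suitable central subgroup of the derived subgroup), and that quotient is desirable by Corollary~\ref{cor:5}, again contradicting Lemma~\ref{lem:27}.

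Concretely, the argument I expect to run is: let $N=\langle a,b\rangle$, a desirable non-abelian $3$-group of exponent $3$. Its center $Z(N)$ is nontrivial; pass to $N/M$ where $M$ is a maximal subgroup of $Z(N)$ among those with $N/M$ non-abelian. Then $N/M$ is non-abelian of exponent dividing $3$ with cyclic center, so it is extraspecial of order $27$ and exponent $3$ (the exponent-$9$ extraspecial group is excluded since $N/M$ has exponent $3$). This contradicts Lemma~\ref{lem:27}, because a quotient of a desirable group is desirable by Corollary~\ref{cor:5}. Hence no such $a,b$ exist, every cyclic (equivalently, every subgroup, since they all have order $1$ or $3$) subgroup of $G$ is normal, and $G$ is a Dedekind group of odd order, therefore abelian; combined with exponent $3$ this gives $G\simeq C_3^m$.

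The main obstacle I anticipate is the reduction from an arbitrary non-abelian two-generated section of exponent $3$ down to the order-$27$ extraspecial group: one has to be a little careful that quotienting out part of the center keeps the group non-abelian and of exponent $3$, and that one genuinely lands on the exponent-$3$ extraspecial group rather than accidentally on the exponent-$9$ one (which is automatically excluded here by Corollary~\ref{cor:8}, but the bookkeeping should be made explicit). Everything else --- the appeal to Baer--Dedekind and the final identification $C_3^m$ --- is routine once that reduction is in place.
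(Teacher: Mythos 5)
Your overall strategy is the same as the paper's: reduce to a two-generated non-abelian desirable $3$-group of exponent $3$, exhibit a section of order $27$, and contradict Lemma~\ref{lem:27} via Corollary~\ref{cor:5}. However, the concrete reduction you propose has a gap at exactly the step you flagged. First, taking $M$ maximal \emph{inside $Z(N)$} with $N/M$ non-abelian does not give that $Z(N/M)$ is cyclic: central elements of $N/M$ need not lift to central elements of $N$, so a non-cyclic center of the quotient cannot be killed by enlarging $M$ within $Z(N)$. Second, ``non-abelian of exponent $3$ with cyclic center'' does not by itself yield ``extraspecial of order $27$'': you need class $2$ to get $N'\le Z(N)$, and you need two-generation to force the order to be $3^{1+2}$ rather than $3^{1+2n}$. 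Both points are repairable --- most cheaply by quoting that a two-generated group of exponent $3$ is a quotient of the Burnside group $B(2,3)$, which has order $27$, so $\langle a,b\rangle$ itself already has order $27$ and no quotienting is needed --- but as written the key step is asserted rather than proved.

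The paper avoids this bookkeeping with a minimal-counterexample argument: if $G$ is a non-abelian desirable $3$-group of least order, then $G=\langle x,y\rangle$ for non-commuting $x,y$, it has exponent $3$ by Corollary~\ref{cor:8}, and for a non-identity central $z$ the quotient $G/\langle z\rangle$ is abelian by minimality, two-generated and non-cyclic, hence of order $9$; thus $|G|=27$, contradicting Proposition~\ref{prop:10}. You may want to adopt that route. Also, your Baer--Dedekind detour is unnecessary: in a $3$-group every nontrivial cyclic subgroup has order $3$, so if $\langle b\rangle$ is normal then every element (of $3$-power order) induces an automorphism of $\langle b\rangle\simeq C_3$ of odd order inside $\mathrm{Aut}(C_3)\simeq C_2$, hence centralizes $b$; normality of all cyclic subgroups therefore gives abelianness directly, without invoking the classification of Dedekind groups.
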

\begin{proof}
  Suppose that $G$ is a desirable non-abelian $3$-group of the least order.
  Let $x,y\in G$ with $xy\ne yx$.
  By Corollary~\ref{cor:5}, $\gn{x,y}$ is a desirable non-abelian $3$-group, which is of exponent three by Corollary~\ref{cor:8}.
  By the minimality of $|G|$ we have $G=\gn{x,y}$.
  Since $G$ has a non-trivial center, there exists a non-identity element $z\in Z(G)$.
  By the minimality of $|G|$ and Corollary~\ref{cor:5}, $G/\gn{z}$ is an elementary abelian $3$-group of rank two.
  This implies that $|G|=|\gn{x,y}|=|G/\gn{z}||\gn{z}|=27$, which contradicts Proposition~\ref{prop:10}.

  Next we suppose that $G$ is a desirable abelian $3$-group.
  Since there are no element of order lager than $3$ in $G$ by Corollary~\ref{cor:8}, $G$ is isomorphic to $C_3^m$ for some nonnegative integer $m$.
\end{proof}

\begin{lem}\label{lem:33}
  Let $G$ be a desirable group and $a\in G$.
  If $a$ normalizes an elementary abelian $3$-group $H$ of $G$,
  then $a^{-1}ba\in \{b,b^{-1}\}$ for each $b\in H$.
\end{lem}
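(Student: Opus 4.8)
The plan is to argue by contradiction along the lines of the proof of Lemma~\ref{lem:30}. Suppose $a^{-1}ba\notin\{b,b^{-1}\}$ for some $b\in H$; then $b\ne 1$, so $|b|=3$, $\gn b=\{1,b,b^{-1}\}$, and $a^{-1}ba\notin\gn b$. Write $\phi$ for conjugation by $a$ on $H$ and put $K=\gn{\phi^i(b)\mid i\in\mathbb{Z}}\le H$; this is a $\phi$-invariant elementary abelian $3$-subgroup, $\gn{a,b}=K\gn a$, and $\gn{a,b}$ is desirable by Corollary~\ref{cor:5}. Note $a\notin H$ (otherwise $\phi(b)=b$), and split into cases according to $|a|\in\{1,2,3,4,6\}$ via Corollary~\ref{cor:8}, the case $|a|=1$ being vacuous.

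For $|a|=3$ the group $\gn{a,b}=K\gn a$ is a non-abelian $3$-group (non-abelian since $a^{-1}ba=\phi(b)\ne b$), contradicting Lemma~\ref{lem:155}. For $|a|=2$, $\phi|_K$ is an involutory automorphism, so $K=K^{+}\oplus K^{-}$ where $K^{\pm}$ are the eigenspaces for $\pm 1$; since $\phi(b)\notin\gn b$ we have $b\notin K^{+}$ and $b\notin K^{-}$, hence both $K^{+}$ and $K^{-}$ are nontrivial, and for $1\ne u\in K^{+}$, $1\ne v\in K^{-}$ we get $\gn{u,v,a}\cong C_3\times S_3$ (with $u$ central and $a$ inverting $v$), which is undesirable by Lemma~\ref{lem:182}. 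For $|a|=6$ write $a=cd=dc$ with $c,d\in\gn a$, $|c|=2$, $|d|=3$; by the case $|a|=3$ applied to $d$ we get $d^{-1}xd\in\{x,x^{-1}\}$ for all $x\in H$, and since $|d|=3$ this forces $d$ to centralize $H$, whence $a^{-1}ba=c^{-1}bc\in\{b,b^{-1}\}$ by the case $|a|=2$, a contradiction.

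The remaining and most delicate case is $|a|=4$; here $\phi|_K$ has order $1$, $2$, or $4$, and order $1$ is excluded by $\phi(b)\notin\gn b$. If $\phi|_K$ has order $4$, then since $x^4-1=(x-1)(x+1)(x^2+1)$ is squarefree over $\mathbb{F}_3$ and $x^2+1$ is irreducible there, the minimal polynomial of $\phi|_K$ is a multiple of $x^2+1$; hence $\phi^2=-1$ on a $2$-dimensional $\phi$-invariant subspace $V\le K$, so $a^2$ inverts every element of $V\cong C_3\times C_3$ and $\gn{V,a^2}\cong(C_3\times C_3)\rtimes C_2$ is undesirable by Lemma~\ref{lem:181}. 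If $\phi|_K$ has order $2$, then $a^2$ centralizes $K$ and, as in the case $|a|=2$, $K=K^{+}\oplus K^{-}$ with both summands nontrivial; choosing $1\ne u\in K^{+}$ and $1\ne v\in K^{-}$, the element $a^2$ is central in $\gn{u,v,a}$ and $\gn{u,v,a}/\gn{a^2}\cong C_3\times S_3$ is undesirable by Lemma~\ref{lem:182}. In each subcase Corollary~\ref{cor:5} gives the contradiction.

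I expect the only real work to be the case $|a|=4$: correctly locating the $a^2$-invariant subgroup of $G$ and recognizing the resulting subgroup or quotient as $(C_3\times C_3)\rtimes C_2$ or $C_3\times S_3$ so that Lemmas~\ref{lem:181} and~\ref{lem:182} apply. The other cases are immediate once the structure of $\gn{a,b}$ is described, and the elementary-abelian hypothesis on $H$ is used throughout to guarantee that every relevant section is exactly the group named in the cited lemma.
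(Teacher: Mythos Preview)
Your proof is correct and follows the paper's overall strategy: argue by contradiction, split into cases by $|a|\in\{2,3,4,6\}$, and in each case locate an undesirable subquotient. The execution differs in the details. For $|a|=2$ and $|a|=4$ the paper works element-by-element (in the $|a|=4$ case it splits into three subcases according to whether $a^2ba^2$ equals $b$, $b^{-1}$, or neither) and then simply observes that the resulting group has order $18$ and is non-abelian, invoking Proposition~\ref{prop:10}. You instead analyze $\phi|_K$ linearly over $\mathbb{F}_3$: an eigenspace decomposition when $\phi|_K$ has order $2$, and the factorization $x^4-1=(x-1)(x+1)(x^2+1)$ when it has order $4$. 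This reduces $|a|=4$ to two subcases and names the obstruction precisely as $(C_3\times C_3)\rtimes C_2$ or $S_3\times C_3$, citing Lemmas~\ref{lem:181} and~\ref{lem:182} directly rather than through Proposition~\ref{prop:10}. Your organization is a little cleaner and more informative about which group actually appears; the paper's version is slightly more elementary in that it avoids any linear-algebra language.
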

\begin{proof}
  Suppose the contrary, i.e., $a^{-1}ba\notin \{b,b^{-1}\}$ for $a\in N_G(H)$ and $b\in H$.
  Since $\gn{a^{-i}b a^i\mid i=0,1,\ldots,|a|-1}$ is a subgroup of $H$,
  it is an elementary abelian $3$-group of rank at least two.
  By Corollary~\ref{cor:8}, we divide our proof according to $|a|\in \{2,3,4,6\}$.

  If $|a|=2$, then $\gn{a,b}$ has order $18$, which contradicts Corollary~\ref{cor:5} and Proposition~\ref{prop:10}.

  If $|a|=4$, then
  \[\gn{b,a^{-1}ba, a^2ba^2, aba^{-1}}\]
  is an elementary abelian $3$-group contained in $H$.
  Note that $\gn{b, a^2ba^2}$ is an elementary abelian $3$-group of rank at most two.
  If $a^2ba^2\notin \{b,b^{-1}\}$, then $\gn{a^2, b}$ is a non-abelian group of order $18$, which contradicts Proposition~\ref{prop:10}.
  If $a^2 ba^2=b$,
  then $a^2$ is central in $\gn{a,b}$, and $a^{-1}ba\notin b\gn{a^2}$ by the assumption that $a^{-1}ba\in H$ and $a^2\notin H$.
  This implies that $\gn{a,b}/\gn{a^2}$ is a non-abelian group of order 18,
  which contradicts Proposition~\ref{prop:10}.
  If $a^2ba^2=b^{-1}$, then $\gn{a^2, b, a^{-1}ba}$ is a non-abelian group of order $18$,
  which contradicts Proposition~\ref{prop:10}.

  If $|a|=3$, then $\gn{a,H}$ is abelian
  by Lemma~\ref{lem:155}, and hence $a$ centralizes $H$.

  If $|a|=6$,
  then $a=cd=dc$ for some $d,c\in \gn{a}$ with $|d|=2$ and $|c|=3$.
  Since both $d$ and $c$ normalize $\gn{b}$, $a$ also normalizes $\gn{b}$.
\end{proof}

It is well-known that a minimal normal subgroup of a finite group is the direct product of isomorphic simple groups
(see \cite{DM}). Applying this fact with Corollary~\ref{cor:8} we obtain that any minimal normal subgroup of a desirable group is
isomorphic to an elementary abelian $p$-group for some $p\in \{2,3\}$.
\begin{lem}\label{lem:50}
  Let $G$ be a desirable group and $N$ a minimal normal subgroup of $G$.
  If $N\simeq C_2^m$ and there exist $a,b\in G$ with $|a|=|b|=2$ and $|ab|=3$,
  then $m= 1$.
\end{lem}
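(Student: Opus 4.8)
The plan is to reduce the statement to Lemma~\ref{lem:30}, which already forces a normal elementary abelian $2$-subgroup of a desirable group to be central, and then to exploit the copy of $S_3$ supplied by the hypothesis.

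First I would note that, since $N\trianglelefteq G$ and $N\simeq C_2^m$ is elementary abelian of exponent $2$, every element of $G$ normalizes $N$; hence Lemma~\ref{lem:30}, applied with $H=N$ to each $a\in G$, gives $N\subseteq Z(G)$. From here there are two ways to conclude. The shortest is to observe that a minimal normal subgroup contained in the centre has every subgroup normal in $G$ (central subgroups have all subgroups normal), hence by minimality no nontrivial proper subgroup at all, and so is cyclic of prime order; as $N\simeq C_2^m$ this already yields $m=1$, and does not even use $a,b$.

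Alternatively, to make use of the hypothesis on $a$ and $b$ (presumably the intended route): since $a,b$ are involutions with $|ab|=3$, the subgroup $D:=\gn{a,b}$ is dihedral of order $6$, so $D\simeq S_3$ and $Z(D)=1$. Suppose $m\ge 2$ and choose $M\le N$ with $M\simeq C_2^2$. Because $M\subseteq Z(G)$ we have $M\cap D\subseteq Z(G)\cap D\subseteq Z(D)=1$, and $M$ is central in $G$, so $\gn{M,D}=M\times D\simeq C_2\times C_2\times S_3$. This group is undesirable by Lemma~\ref{lem:241}, contradicting Corollary~\ref{cor:5} since $G$ is desirable. Hence $m\le 1$, and $m=1$ because $N\ne\{1\}$.

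I do not expect a genuine obstacle: the one substantive point — passing from ``$a$ normalizes $N$'' to ``$a$ centralizes $N$'' for elementary abelian $2$-groups — is precisely Lemma~\ref{lem:30}, and the remainder is a routine splitting argument. The only small care needed is the identification $\gn{M,D}=M\times D$, which rests on $M$ being central together with $M\cap D=1$, the latter holding because $M\cap D$ lies both in $M\subseteq Z(G)$ and in the centreless group $D$.
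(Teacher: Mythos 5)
Your proposal is correct, and your second route is essentially the paper's own argument: the paper also uses Lemma~\ref{lem:30} to place $N$ in the centre and then derives a contradiction from a non-abelian subgroup of order $24$ (namely $H\gn{a,b}\simeq C_2\times C_2\times S_3$ for $H\le N$ of order $4$), citing Proposition~\ref{prop:10} where you cite Lemma~\ref{lem:241} directly. The only cosmetic difference is how triviality of the intersection with $\gn{a,b}$ is obtained: the paper proves $N\cap\gn{a,b}=1$ \emph{before} knowing $N$ is central, by noting that a nontrivial intersection would force all involutions of $\gn{a,b}\simeq S_3$ (which are conjugate in $\gn{a,b}$) into the normal subgroup $N$, whereas you get it afterwards from $M\subseteq Z(G)$ and $Z(S_3)=1$; both are fine.

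Your first route is a genuine (and valid) simplification worth highlighting: once Lemma~\ref{lem:30} is applied to every element of $G$ (all of which normalize the normal subgroup $N$), one gets $N\subseteq Z(G)$, and then every subgroup of $N$ is normal in $G$, so minimality of $N$ forces $|N|$ to be prime and hence $m=1$ with no use of $a$ and $b$ at all. This shows the hypothesis on the involutions $a,b$ in the lemma is redundant given Lemma~\ref{lem:30}; the paper presumably carries it along only because that is the setting in which the lemma is invoked later (in Lemma~\ref{lem:70}). What the paper's (and your second) route buys instead is independence from the minimality of $N$ beyond $N\ne 1$; what your first route buys is brevity and fewer hypotheses.
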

\begin{proof}
  We claim that $N\cap \gn{a,b}=1$, otherwise $\gn{a,b}$ contains an involution in $N$. Since all involutions of $\gn{a,b}$ are conjugate in $\gn{a,b}$,
  it follows that $\gn{a,b}\subseteq N$, a contradiction to $|ab|=3$ and $N\simeq C_2^m$.
  By Lemma~\ref{lem:30}, $N$ is central, and hence, if $N$ has a subgroup $H$ of order $4$, then, by Lemma~\ref{lem:30},
  $|H\gn{a,b}|=24$, which contradicts Proposition~\ref{prop:10}.
  Therefore, $m= 1$.
\end{proof}

\begin{lem}\label{lem:60}
  Let $G$ be a desirable group and $N$ a minimal normal subgroup of $G$.
  If $N\simeq C_3^m$ and $a,b\in G$ with $|a|=|b|=2$ and $|ab|=3$,
  then $N=\gn{ab}$.
\end{lem}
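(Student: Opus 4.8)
The plan is to use Lemma~\ref{lem:33} to pin down how the two involutions $a$ and $b$ act on $N$, and then to run a short case analysis in which every case except the desired one produces one of the forbidden groups of Section~3. Throughout, write $c=ab$, so that $\gn{a,b}$ is dihedral of order $6$ (hence isomorphic to $S_3$), $|c|=3$, and $aca^{-1}=bcb^{-1}=c^{-1}$; note that $m\ge 1$ because $N$ is a nontrivial minimal normal subgroup, and that $\gn{a,b}N=\gn{a,c,N}$ since $b=ac$.

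The first step is to show that each of $a$ and $b$ either centralizes $N$ or inverts $N$. Since $N$ is normal in $G$, both $a$ and $b$ normalize the elementary abelian $3$-group $N$, so Lemma~\ref{lem:33} tells us the conjugation automorphism $x\mapsto axa$ of $N$ sends each $x\in N$ to $x$ or to $x^{-1}$. Viewing $N$ as an $\mathbb{F}_3$-vector space, this is a linear map that squares to the identity and for which every nonzero vector is an eigenvector with eigenvalue $1$ or $-1$; since $2$ is invertible modulo $3$, a short argument (if $v$ is fixed and $w$ is inverted, then $v+w$ is neither fixed nor inverted unless $v=0$ or $w=0$) rules out the existence of eigenvectors for both eigenvalues, so the map is $\pm\mathrm{id}$. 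The same holds with $b$ in place of $a$.

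The second step is the case analysis; in each case one computes the action of $c$ on $N$ from $cxc^{-1}=a(bxb)a$. If $a$ and $b$ both centralize $N$, then $\gn{a,b}\le C_G(N)$; when $c\notin N$ this forces $\gn{a,b}N\simeq S_3\times C_3^m$ with $m\ge 1$, contradicting Lemma~\ref{lem:182} and Corollary~\ref{cor:5}, and when $c\in N$ the element $a$ centralizes $c$, contradicting $aca^{-1}=c^{-1}\ne c$. If one of $a,b$ centralizes $N$ and the other inverts $N$, then $c$ inverts $N$; when $c\notin N$ the subgroup $N\gn{c}=N\rtimes\gn{c}$ is a non-abelian $3$-group, undesirable by Lemma~\ref{lem:155} and Corollary~\ref{cor:5}, and when $c\in N$ the element $c$ would both invert $N$ nontrivially and lie in the abelian group $N$, which is impossible. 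Finally, if $a$ and $b$ both invert $N$, then $c$ centralizes $N$ and $a$ inverts both $N$ and $c$, so $\gn{a,b}N=N\gn{c}\rtimes\gn{a}$ with $\gn{a}$ acting by inversion on the elementary abelian $3$-group $N\gn{c}$; this group is $C_3^{m+1}\rtimes C_2$ if $c\notin N$ and $C_3^m\rtimes C_2$ if $c\in N$, and whenever its rank is at least $2$ it contains a copy of $(C_3\times C_3)\rtimes C_2$ (any rank-$2$ subspace is inversion-invariant), contradicting Lemma~\ref{lem:181} and Corollary~\ref{cor:5}. Hence the only surviving possibility is that $a$ and $b$ both invert $N$, that $c\in N$, and that $m=1$, whence $N=\gn{c}=\gn{ab}$.

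I expect the main obstacle to be the first step, namely upgrading the pointwise conclusion of Lemma~\ref{lem:33} to the global statement that $a$ (and $b$) acts as a scalar on all of $N$, together with the care needed in the second step to verify that every non-conclusion configuration really does contain one of $S_3\times C_3$, a non-abelian $3$-group, or $(C_3\times C_3)\rtimes C_2$, so that Lemmas~\ref{lem:182},~\ref{lem:155},~\ref{lem:181} and Corollary~\ref{cor:5} apply.
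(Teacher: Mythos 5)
Your proof is correct, but it takes a genuinely different and considerably longer route than the paper's. The paper's argument is a two-line reduction: if $N\ne\gn{ab}$, pick any $c\in N\setminus\gn{ab}$; applying Lemma~\ref{lem:33} to both $a$ and $b$ shows that $\gn{a,b}$ normalizes $\gn{c}$, so $\gn{a,b,c}=\gn{a,b}\gn{c}$ is a non-abelian group of order $18$ (the intersection $\gn{a,b}\cap\gn{c}$ is trivial because the only order-$3$ subgroup of $\gn{a,b}\simeq S_3$ is $\gn{ab}$), contradicting Proposition~\ref{prop:10}. You instead upgrade the pointwise conclusion of Lemma~\ref{lem:33} to the global statement that each of $a,b$ acts on $N$ as $\pm\mathrm{id}$ (your linear-algebra step over $\mathbb{F}_3$ is sound), and then run a four-way case analysis in which the offending subgroup is identified explicitly as $S_3\times C_3$, a non-abelian $3$-group, or $(C_3\times C_3)\rtimes C_2$, invoking Lemmas~\ref{lem:182},~\ref{lem:155} and~\ref{lem:181} respectively; all cases check out, including the treatment of whether $ab$ lies in $N$. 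What your approach buys is a finer structural picture --- it pins down exactly which forbidden group each configuration contains --- but the linearization step you flagged as the main obstacle is unnecessary: a single element of $N$ outside $\gn{ab}$ already yields the order-$18$ contradiction, and Proposition~\ref{prop:10} already packages Lemmas~\ref{lem:181} and~\ref{lem:182} for you.
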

\begin{proof}
  Suppose $c\in N\setminus \gn{ab}$.
  By Lemma~\ref{lem:33}, $\gn{a,b}$ normalizes $\gn{c}$, and hence,
  $|\gn{a,b,c}|=18$, which contradicts Proposition~\ref{prop:10}.
\end{proof}

\begin{lem}\label{lem:70}
  Let $G$ be a desirable group.
  If $G$ has two non-commuting involutions, then
  $|G|\in \{6,12\}$.
\end{lem}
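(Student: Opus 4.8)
The plan is to induct on $|G|$, splitting according to whether $Z(G)$ contains an involution. First I would make a harmless reduction. By Lemma~\ref{lem:13} the group $\langle a,b\rangle$ is dihedral of order $6$ or $12$, and in either case it contains two non-commuting involutions whose product has order $3$ — two transpositions if $\langle a,b\rangle\simeq S_3$, and the reflections $s,\,r^2s$ if $\langle a,b\rangle\simeq D_{12}=\langle r,s\mid r^6=s^2=1,\,srs=r^{-1}\rangle$. Replacing $a,b$ by such a pair, I may assume $\langle a,b\rangle\simeq S_3$; in particular $|G|\ge 6$ and $[a,b]=(ab)^2$ has order $3$. Since a group with non-commuting involutions always contains a copy of $S_3$, the induction is vacuous for $|G|<6$.

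\emph{Case $1$: $Z(G)$ contains an involution $z$.} As $a$ is not central, $a\notin\langle z\rangle$, so the images $\bar a,\bar b$ in $\bar G:=G/\langle z\rangle$ are involutions, and $[\bar a,\bar b]=\overline{(ab)^2}\ne 1$ because $(ab)^2$ has order $3$ while $|\langle z\rangle|=2$. Thus $\bar G$ is a desirable group (Corollary~\ref{cor:5}) with non-commuting involutions, so by the induction hypothesis $|\bar G|\in\{6,12\}$ and hence $|G|\in\{12,24\}$; the value $24$ is excluded since $G$ is then a non-abelian desirable group of order $24$, contradicting Proposition~\ref{prop:10}. So $|G|=12$.

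\emph{Case $2$: $Z(G)$ contains no involution.} Then $O_2(G)=1$: a minimal normal subgroup of $G$ contained in $O_2(G)$ would be elementary abelian of $2$-power order and, being normalized by every element of $G$, central by Lemma~\ref{lem:30} — excluded. By Corollary~\ref{cor:8} we have $|G|=2^a3^b$, so $G$ is solvable by Burnside's theorem; hence $F(G)\ne 1$, and since $O_2(G)=1$ we get $F(G)=R:=O_3(G)$ with $C_G(R)\le R$, so $C_G(R)=R$ as $R$ is abelian. By Lemma~\ref{lem:155}, $R\simeq C_3^k$ for some $k\ge 1$. Since $a$ normalizes $R$, Lemma~\ref{lem:33} gives $a^{-1}ra\in\{r,r^{-1}\}$ for every $r\in R$; using that $R$ is abelian, the fixed elements and the inverted elements each form a subgroup, their union is $R$, and a group is never the union of two proper subgroups, so $a$ either centralizes or inverts $R$. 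It cannot centralize $R$, for then $a\in C_G(R)=R$, impossible for an involution in a $3$-group; hence $a$ inverts $R$. If $k\ge 2$, a rank-$2$ subgroup $R_0\le R$ yields $R_0\rtimes\langle a\rangle\simeq(C_3\times C_3)\rtimes C_2$ with $\langle a\rangle$ acting by inversion, a subgroup of $G$, contradicting Lemma~\ref{lem:181} and Corollary~\ref{cor:5}. So $R\simeq C_3$, and then $G/R=G/C_G(R)$ embeds in $\mathrm{Aut}(C_3)\simeq C_2$, giving $|G|\le 6$; together with $|G|\ge 6$ this forces $|G|=6$.

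Combining the two cases gives $|G|\in\{6,12\}$. I expect the main obstacle to be Case $2$, and within it the fact that $O_3(G)$ is self-centralizing, which is the only point where solvability (Burnside) is invoked; if one wishes to avoid that, the same conclusion can be reached by working with a minimal normal $3$-subgroup $N$ directly, showing that the arguments above force $N=\langle ab\rangle\simeq C_3$, that $C_G(N)$ has index $2$ and contains no involution (otherwise a copy of $S_3\times C_3$ appears, undesirable by Lemma~\ref{lem:182}), and hence that $C_G(N)$ is a $C_3$ inverted by $a$.
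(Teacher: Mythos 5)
Your proof is correct, but it takes a genuinely different route from the paper's. The paper also inducts on $|G|$ after the same initial reduction to a pair with $|ab|=3$, but it organizes everything around a minimal normal subgroup $N$ of $G$: Lemmas~\ref{lem:50} and~\ref{lem:60} force either $N\simeq C_2$ with $N\cap\langle a,b\rangle=1$ or $N=\langle ab\rangle$, and the bulk of the work is a hands-on analysis of $G/N$ in the second case (ruling out $8\mid |G/N|$, ruling out non-commuting involutions in $G/N$, then ruling out $3\mid |G/N|$, each via Proposition~\ref{prop:10} together with Lemmas~\ref{lem:30}, \ref{lem:33} and~\ref{lem:al}). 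You instead split on whether $Z(G)$ contains an involution. Your Case~1 parallels the paper's first case. Your Case~2 replaces the $G/N$ analysis with Fitting-subgroup theory: Burnside's $p^aq^b$ theorem gives solvability, $O_2(G)=1$ forces $F(G)=O_3(G)$ to be self-centralizing, and Lemmas~\ref{lem:155}, \ref{lem:33} and~\ref{lem:181} pin $F(G)$ down to a $C_3$ inverted by $a$, whence $|G|\le 6$ by the $N/C$ theorem. This is shorter and bypasses Lemmas~\ref{lem:50} and~\ref{lem:60} entirely, at the cost of invoking Burnside and standard facts about the Fitting subgroup of a solvable group; since the paper itself implicitly needs Burnside (or an equivalent) to assert that minimal normal subgroups of desirable groups are elementary abelian $p$-groups for $p\in\{2,3\}$, the extra machinery is not out of line with the paper's toolkit. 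The small steps I checked (the subgroup-union argument for "centralizes or inverts", the self-normalizing property $C_G(F(G))\le F(G)$, and the non-vanishing of $[\bar a,\bar b]$ in Case~1) all hold.
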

\begin{proof}
  Use induction on $|G|$. Suppose that $G$ is a desirable group with two non-commuting involutions and
  $|G|$ is minimal such that $|G|\notin \{6,12\}$.
  Note that any two non-commuting involutions generate the dihedral group of degree $3$ or $6$ by Lemma~\ref{lem:13},
  and each of the cases contains two non-commuting involutions whose product has order three.
  Let $N$ be a minimal normal subgroup of $G$ and $a,b\in G$ such that $|a|=|b|=2$ and $|ab|=3$.
  Recall that $N$ is an elementary abelian $p$-group for some $p\in \{2,3\}$.
  Applying Lemma~\ref{lem:50} and \ref{lem:60} we obtain that
  $N\simeq C_2$ and $N\cap \gn{a,b}=1$, or $N=\gn{ab}$.

  If $N\simeq C_ 2$ and $N\cap \gn{a,b}=1$, then $G/N$ is a desirable group with two non-commuting involutions.
  By the minimality of $|G|$, $|G/N|\in \{6,12\}$. Since $|G|\ne 12$ and $|N|=2$, it follows that $|G/N|=12$, and hence $G$ is a non-abelian group of order $24$,
  a contradiction to Proposition~\ref{prop:10}.

  Suppose $N=\gn{ab}$.

  We claim that $8\nmid |G/N|$.
  Otherwise there exists a subgroup $L/N$ of $G/N$ such that $|L/N|=8$ and $a,b\in L$ by Sylow's theorem, implying that
  $L$ is a non-abelian group of order $24$, a contradiction to Proposition~\ref{prop:10}.

  If $G/N$ has two non-commuting involutions, then $|G/N|\in \{6,12\}$ by the minimality of $|G|$.
  By Proposition~\ref{prop:10}, $|G/N|=12$. Since $G/N$ is not isomorphic to $A_4$ by Lemma~\ref{lem:al},
  $G/N$ has a minimal normal subgroup $N_1/N$ of order $3$ by the classification of groups of order $12$.
  Since $|N_1|=9$, $\gn{a,b,N_1}$ is a non-abelian group of order $18$ by Lemma~\ref{lem:33}, a contradiction to Proposition~\ref{prop:10}.
  Therefore, we conclude that all involutions of $G/N$ commute for each other, and the subgroup of $G/N$ generated by all involutions
  is a normal subgroup of $G/N$ which is an elementary abelian $2$-group.

  By the last claim, it suffices to show that $3\nmid |G/N|$.
  Otherwise, there exists $L/N\leq G/N$ such that $N\leq L$ and $|L/N|=3$.
  Since $aN$ is an involution of $G/N$, it is central by Lemma~\ref{lem:30}.
  Thus, $\gn{aN,L/N}$ is a subgroup of order $6$.
  This implies that $\gn{L,a,b}$ is a non-abelian group of order $18$, a contradiction to Proposition~\ref{prop:10}.
\end{proof}

\begin{lem}\label{lem:80}
  Let $G$ be a non-abelian desirable group any two involutions of which are commute.
  Then the subgroup of $G$ generated by all involutions is normal in $G$, and
  $G$ is isomorphic to $C_3\rtimes C_4$ unless $G$ is a $2$-group.
\end{lem}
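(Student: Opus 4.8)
Write $K$ for the subgroup of $G$ generated by all involutions. The plan is to dispose of the normality statement first, then to reduce to the case of a normal Sylow $3$-subgroup, and finally to read off the structure. For the first assertion: the set of involutions of $G$ is invariant under every automorphism of $G$, so $K$ is characteristic, in particular normal; since any two involutions commute and a product of two commuting involutions has order dividing $2$, the group $K$ is elementary abelian; and since every element of $G$ normalizes this elementary abelian $2$-group, Lemma~\ref{lem:30} gives $K\le Z(G)$. From now on assume $G$ is not a $2$-group; then by Corollary~\ref{cor:8} we have $|G|=2^a3^b$ with $b\ge 1$, every element of $G$ has order in $\{1,2,3,4,6\}$, and $G$ is solvable.

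The heart of the argument is to show that a Sylow $3$-subgroup $Q$ of $G$ is normal; being a desirable $3$-group, $Q\simeq C_3^\ell$ with $\ell\ge 1$ by Corollary~\ref{cor:5} and Lemma~\ref{lem:155}. Since $O_3(G)\le Q$, it suffices to prove that no desirable group $H$ with $3\mid|H|$ and $O_3(H)=1$ exists, and then apply this to $H=G/O_3(G)$: that quotient is desirable, has trivial $3$-core, and (unless $Q=O_3(G)$) is still of order divisible by $3$, so we would obtain $Q=O_3(G)\trianglelefteq G$. So suppose $H$ is such a group. Then the Fitting subgroup satisfies $F(H)=O_2(H)$, it is nontrivial, $C_H(O_2(H))\le O_2(H)$ by solvability, and by Lemma~\ref{lem:15} together with Corollary~\ref{cor:8} it is abelian of exponent dividing $4$ or isomorphic to $Q_8\times C_2^s$; in either case $W:=\Omega_1(O_2(H))$ is a characteristic elementary abelian subgroup and $O_2(H)/W$ is again elementary abelian. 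Let $q\in H$ have order $3$. Then $q$ normalizes $W$, hence centralizes $W$ by Lemma~\ref{lem:30}. If $q$ acts trivially on $O_2(H)/W$, then for each $x\in O_2(H)$ one has $x^q=xz$ with $z\in W$, whence $x^{q^2}=(xz)^q=x^qz^q=(xz)z=xz^2=x$; thus $q^2$, and therefore $q$, centralizes $O_2(H)$. If instead $q$ acts non-trivially on the elementary abelian group $O_2(H)/W$, then decomposing it under $\langle q\rangle$ yields a $\langle q\rangle$-invariant $C_2^2$-section on which $q$ acts non-trivially, producing a section of $H$ isomorphic to $A_4$ and contradicting Lemma~\ref{lem:al}. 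Hence every element of order $3$ in $H$ centralizes $O_2(H)$, so the characteristic subgroup of $H$ generated by all $3$-elements lies in $C_H(O_2(H))\le O_2(H)$, a $2$-group --- impossible since $3\mid|H|$. Therefore $Q\trianglelefteq G$.

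With $Q$ normal, Schur--Zassenhaus gives $G=Q\rtimes P$ for a Sylow $2$-subgroup $P$. Every element of $G$ normalizes $Q$, so by Lemma~\ref{lem:33} it induces on $Q$ an automorphism carrying each element to itself or to its inverse, and such an automorphism, being a homomorphism, must be the identity or inversion; hence $|G:C_G(Q)|\le 2$. If $G=C_G(Q)$, then $G=Q\times P$ is non-abelian and nilpotent, forcing $P\simeq Q_8\times C_2^s$ by Lemma~\ref{lem:15} and hence an element of order $12$, a contradiction; so $|G:C_G(Q)|=2$. Then $C_P(Q):=P\cap C_G(Q)$ has index $2$ in $P$ and $C_G(Q)=Q\times C_P(Q)$; an element of order $4$ in $C_P(Q)$ would give an element of order $12$, so $C_P(Q)$ is elementary abelian. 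As $Q_8\times C_2^m$ has no elementary abelian subgroup of index $2$, the desirable $2$-group $P$ is abelian of exponent dividing $4$; a count of its elements of order at most $2$ --- together with the observation that an involution of $P$ lying outside $C_P(Q)$ would invert an element of $Q$ and so produce two non-commuting involutions --- forces $P\simeq C_4\times C_2^k$ with $C_P(Q)=\{\,x\in P:x^2=1\,\}$. Writing $P=\langle a\rangle\times B$ with $|a|=4$ and $B\simeq C_2^k$, we get that $B$ centralizes $Q$ while $a$ inverts $Q$, so $G\simeq (C_3^\ell\rtimes C_4)\times C_2^k$ with $C_4$ acting by inversion.

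It remains to eliminate the extra factors. If $k\ge 1$, then $G$ contains $(C_3\rtimes C_4)\times C_2$, undesirable by Lemma~\ref{lem:242}, so $k=0$. If $\ell\ge 2$, then $G$ contains $C_3^2\rtimes C_4$, whose quotient by its central involution $\langle a^2\rangle$ is $(C_3\times C_3)\rtimes C_2$ with the inversion action, undesirable by Lemma~\ref{lem:181}, so $\ell=1$. Therefore $G\simeq C_3\rtimes C_4$, as claimed. The step I expect to be the genuine obstacle is the normality of the Sylow $3$-subgroup: that is the one place where bookkeeping with the already-forbidden groups does not suffice and one must invoke solvability, the Fitting inequality $C_H(F(H))\le F(H)$, and coprime-action arguments --- with the undesirability of $A_4$ doing the essential work of excluding the $Q_8\times C_2^s$ possibility for $O_2(H)$; once that is in place, the analysis of the $2$-part is routine.
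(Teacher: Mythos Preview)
Your proof is correct, and it follows a genuinely different route from the paper's.

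The paper argues by induction on $|G|$: it picks a central subgroup $L$ of order $2$ inside $K$ and runs a three-way case split on $G/L$ (two non-commuting involutions, non-abelian with commuting involutions, abelian), invoking Lemma~\ref{lem:70} and Proposition~\ref{prop:10} to force $|G|\in\{12,24\}$ or a contradiction in each branch. By contrast, you first prove outright that the Sylow $3$-subgroup is normal, via a Fitting-subgroup argument in $H=G/O_3(G)$: you use $C_H(F(H))\le F(H)$ for solvable $H$, Lemma~\ref{lem:30} on $\Omega_1(O_2(H))$, and the $A_4$-obstruction (Lemma~\ref{lem:al}) on $O_2(H)/\Omega_1(O_2(H))$ to show every $3$-element centralises $O_2(H)$. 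Once $Q\trianglelefteq G$, you apply Schur--Zassenhaus and Lemma~\ref{lem:33} to get $|G:C_G(Q)|\le2$, and then read off $G\cong(C_3^\ell\rtimes C_4)\times C_2^k$, trimming the parameters with Lemmas~\ref{lem:181} and~\ref{lem:242}. Your approach is more structural and bypasses Lemma~\ref{lem:70} entirely (and hence also Lemmas~\ref{lem:50} and~\ref{lem:60}), at the cost of importing the Fitting inequality and Schur--Zassenhaus; the paper's approach stays closer to bare hands but leans on the rather delicate induction in Lemma~\ref{lem:70}. Both ultimately rest on the same stock of small forbidden groups, though you use fewer of them directly.
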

\begin{proof}
  The first statement is obvious.
  Let $K$ be the subgroup of $G$ generated by all involutions of $G$.
  Since $G$ is non-abelian, $G$ is not a $3$-group by Lemma~\ref{lem:155}.
  This implies that $K$ has a subgroup $L$ of order two.
  By Lemma~\ref{lem:30}, $L$ is central in $G$.

  We use the induction on $|G|$ to prove the second statement.
  Let $G$ be a non-abelian desirable group of the least order such that
  all involutions of $G$ commute for each other and $G$ is neither $2$-group nor $G\simeq C_3\rtimes C_4$.

  If $G/L$ has two non-commuting involutions, then $|G/L|\in \{6,12\} $ by Lemma~\ref{lem:70}.
  By Proposition~\ref{prop:10}, $|G|=12$, and hence $G\simeq C_3\rtimes C_4$ by the classification of groups of order $12$, a contradiction.

  If $G/L$ has no two non-commuting involutions and non-abelian then, by the minimality of $|G|$,
  $G/L\simeq C_3\rtimes C_4$ or a $2$-group. But, the former case does not occur by Proposition~\ref{prop:10},
  and the latter case implies that $G$ is a $2$-group, a contradiction.

  Suppose that $G/L$ is an abelian group any two involutions of which are commute.
  We claim that $a\in Z(G)$ for each element $a\in G $ with $|a|=3$.
  Otherwise, $ab\ne ba $ for some $b\in G$.
  Since $G/L$ is abelian, $b^{-1}a b=al$ for a non-identity $l\in L$.
  Since $l\in Z(G)$ and $|l|=2$, it follows that $|al|=6$, which contradicts $|a|=3$.
  Applying the claim for an element $c\in G $ of order 6 we obtain from $c=c^4c^3$, $|c^4|=3$ and $|c^3|=2$ that
  each element of order $2$, $3$ or $6$ is in the center of $G$.
  This implies that there exist $a,b\in G$ such that $|a|=|b|=4$ and $ab\ne ba$ since $G$ is non-abelian.
  Applying Lemma~\ref{lem:15} we conclude that $G$ has a unique Sylow $2$-subgroup, which has a subgroup isomorphic to $Q_8$.
  Since $G$ is not a $2$-group by the assumption, it follows from the claim that there exists a subgroup of $G$ isomorphic to $C_3\times Q_8$,
  a contradiction to Proposition~\ref{prop:10}.
\end{proof}

\begin{flushleft}
  \begin{large}
    Proof of Theorem~\ref{thm:11}
  \end{large}
\end{flushleft}
\begin{proof}
  Suppose that $G$ is a non-abelian desirable group.
  If $G$ has two non-commuting involutions, then $|G|\in \{6,12\}$ by Lemma~\ref{lem:70}.
  If $G$ has no two non-commuting involutions, then
  $G\simeq C_3\rtimes C_4$ or a $2$-group by Lemma~\ref{lem:80}, which is eliminated by Lemma~\ref{lem:15}.
  Since all desirable groups of order 6 or 12 are known to be $S_3$, $S_3\times C_2$ or $C_3\rtimes C_4$,
  this completes the proof.
\end{proof}

\bibliography{ref}
\bibliographystyle{abbrv}

\end{document}